\newtheorem{theorem}{Theorem}
\newtheorem{lemma}{Lemma}
\newtheorem{proposition}{Proposition}
\newtheorem*{theorem*}{Theorem}
\newcommand{\vast}{\bBigg@{4}}
\newcommand{\Vast}{\bBigg@{5}}
\DeclareMathOperator{\dist}{dist}
\title{Lattice points in bodies of revolution II}
\author{Fernando Chamizo \and Carlos Pastor}
\date{}
\begin{document}

\maketitle

\begin{abstract}
In \cite{chamizo} it was shown that when a three-dimensional smooth convex body has rotational symmetry around a coordinate axis one can find better bounds for the lattice point discrepancy than what is known for more general convex bodies. To accomplish this, however, it was necessary to assume a non-vanishing condition on the third derivative of the generatrix. In this article we drop this condition, showing that the aforementioned bound holds for a wider family of revolution bodies, which includes those with analytic boundary.
A novelty in our approach is that, besides the usual analytic methods, it requires studying some Diophantine properties of the Taylor coefficients of the phase on the Fourier transform side.
\end{abstract}

\section{Introduction}

Let $\mathbb{E} \subset \mathbb{R}^d$, $d > 1$, be a compact convex body with non-empty interior, and whose boundary $\partial \mathbb{E}$ is a smooth $(d-1)$-submanifold with positive Gaussian curvature (for short, a \emph{smooth convex body}). Denote by $\mathcal{N}(R)$ the number of points of $\mathbb{Z}^d$ lying in $\mathbb{E}$ after being dilated by a factor $R > 1$, \emph{i.e.},
\[
  \mathcal{N}(R) = \#
  \{ 
    \vec{n} \in \mathbb{Z}^d \, : \,
    \vec{n}/R \in \mathbb{E}
  \}.
\]
A central question in lattice point theory consists in estimating how big the discrepancy
\[
  \mathcal{E}(R) = \mathcal{N}(R) - R^d|\mathbb{E}|
\]
can be for a particular choice of $\mathbb{E}$, where $|\mathbb{E}|$ stands for the volume of $\mathbb{E}$. We denote by $\alpha_\mathbb{E}$ the minimal exponent (using Landau's notation)
\[
  \alpha_\mathbb{E} = \inf 
  \big\{
    \alpha \, : \, \mathcal{N}(R) = R^d |\mathbb{E}| + O(R^\alpha)
  \big\}.
\]
A geometrical observation originally due to Gauss for the circle \cite{gauss}, shows that $\alpha_\mathbb{E} \leq d - 1$. This elementary upper bound has been improved by numerous authors. The state of the art is the following: Huxley \cite{huxley} has proved $\alpha_\mathbb{E} \leq 131/208$ for $d = 2$, and Guo \cite{guo} $\alpha_\mathbb{E} \leq 73/158$ for $d = 3$, and $\alpha_\mathbb{E} \leq (d^2 + 3d + 8)/(d^3 + d^2 + 5d + 4)$ for $d \geq 4$.

For some particular choices of $\mathbb{E}$ it is possible to do better. For example, when $\mathbb{E}$ is the $d$-dimensional unit ball, the problem is essentially solved for $d \geq 4$: the equality $\alpha_\mathbb{E} = d - 2$ follows from classical results on representations of integers by quadratic forms. The best known upper bound for the three-dimensional unit ball is $\alpha_\mathbb{E} \leq 21/16$ shown by Heath-Brown \cite{heathbrown}, while for the circle a recent preprint of Bourgain and Watt \cite{bourgainwatt} improves Huxley's result to $\alpha_\mathbb{E} \leq 517/824$.

The aim of the article \cite{chamizo} was to show that, albeit the substantial differences in lattice point problems for general smooth convex bodies and balls that the previous results evidentiate, if one assumes rotational symmetry around a coordinate axis then one can obtain intermediate results even from the simplest van der Corput's estimate. 
We consider three-dimensional smooth convex bodies of the form
\begin{equation}\label{bor}
\mathbb{E}
=
\big\{(x,y,z)\in\mathbb{R}^3\;:\;
f_2(r)\le z\le f_1(r),\ 0\le r\le r_\infty
\big\}
\end{equation}
where $r=\sqrt{x^2+y^2}$.

In other words, $\mathbb{E}$ is the solid generated by the rotation around the $z$-axis of the curve
\[
 \gamma(t)=
 \begin{cases}
  \big(t,0,f_1(t)\big)& 0\le t\le r_\infty
  \\[9pt]
  \big(2r_\infty-t,0,f_2(2r_\infty-t)\big)& r_\infty\le t\le 2r_\infty
 \end{cases}
 \qquad
 \begin{tabular}{c}
\includegraphics[height=100pt, keepaspectratio=true]{imag1.eps}
 \end{tabular}
\]

Theorem~1.1 of \cite{chamizo} reads:

\begin{theorem*}
Let $\mathbb{E}\subset\mathbb{R}^3$ be a body of revolution as before and suppose that the functions $\frac 1r f_i'''(r)$ (extended by continuity to $r = 0$) do not vanish for $0 \leq r < r_\infty$, where $i = 1, 2$. Then the inequality $\alpha_\mathbb{E} \leq 11/8$ holds.
\end{theorem*}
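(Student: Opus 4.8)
The plan is to reduce the three–dimensional count to the planar Gauss circle problem by horizontal slicing, and then to attack the resulting one–parameter family of exponential sums by van der Corput's method; the crux is that the hypothesis on $\tfrac1r f_i'''$ is precisely the non–degeneracy that a planar van der Corput estimate requires. Writing $\rho(t)$ for the radius of the slice $\{z=t\}\cap\mathbb E$,
\[
  \mathcal N(R)=\sum_{n_3}A\!\bigl(R^{2}\rho(n_3/R)^{2}\bigr),\qquad
  A(x)=\#\{(m_1,m_2)\in\mathbb Z^{2}:m_1^{2}+m_2^{2}\le x\}=\pi x+\Delta(x).
\]
Putting $u=x^{2}+y^{2}$ and $\widetilde f_i(u)=f_i(\sqrt u)$, one has $\rho^{2}=\widetilde f_1^{-1}$ on the upper half of $\mathbb E$ and $\widetilde f_2^{-1}$ on the lower half, so $\rho^{2}$ is continuous and piecewise smooth, with corners only over the two poles; hence Poisson summation yields $\sum_{n_3}\pi R^{2}\rho(n_3/R)^{2}=R^{3}|\mathbb E|+O(R)$, and the whole problem is to prove $\sum_{n_3}\Delta\bigl(R^{2}\rho(n_3/R)^{2}\bigr)\ll R^{11/8+\varepsilon}$, the $\varepsilon$ disappearing at the end upon comparing the counts for $R$ and $R(1+R^{\varepsilon-1})$, or via a smooth majorant/minorant sandwich.

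For the main estimate I would insert the truncated Voronoi–Hardy formula
\[
  \Delta(x)=\frac{x^{1/4}}{\pi}\sum_{k\le N}\frac{r_2(k)}{k^{3/4}}\cos\!\Bigl(2\pi\sqrt{kx}-\tfrac{3\pi}{4}\Bigr)+O\!\bigl(x^{1/2+\varepsilon}N^{-1/2}\bigr),
\]
with $N$ chosen large enough that the error is admissible after the $\asymp R$ summations over $n_3$ (a short computation allows $N\asymp R^{5/4}$), which reduces the task to bounding
\[
  R^{1/2}\sum_{k\le N}\ \sum_{n_3}\frac{r_2(k)}{k^{3/4}}\,\rho(n_3/R)^{1/2}\,e\!\bigl(\pm\sqrt k\,R\,\rho(n_3/R)\bigr).
\]
I would split dyadically in $k$. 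For small $k$ the inner sum over $n_3$ is estimated by the second–derivative test, giving $\ll R^{1/2}k^{1/4}$ (this uses only $\rho''\neq0$, i.e.\ the curvature of $\mathbb E$). For the larger $k$, which carry most of the mass, one must exploit cancellation in both variables at once: view $\sum_{k\asymp K}\sum_{n_3}$ as a genuine planar exponential sum with phase $\Phi(k,n_3)=\sqrt k\,R\,\rho(n_3/R)$ and apply the two–dimensional van der Corput estimate. The decisive computation is
\[
  \det\operatorname{Hess}_{(k,n_3)}\Phi \;=\; -\,\frac{\bigl(\rho^{2}\bigr)''(n_3/R)}{8k} \;=\; \frac{\widetilde f_1''\!\bigl(\rho(n_3/R)^{2}\bigr)}{8k\,\widetilde f_1'\!\bigl(\rho(n_3/R)^{2}\bigr)^{3}},
\]
together with the identity $\widetilde f_i''(u)=\frac{1}{4r^{3}}\int_0^{r}s^{2}\,\frac{f_i'''(s)}{s}\,ds$ ($r=\sqrt u$): the hypothesis $\tfrac1r f_i'''\neq0$ on $[0,r_\infty)$ is exactly the assertion that $\widetilde f_i''$ has constant sign there — i.e.\ that the profile of $\mathbb E$ is strictly convex (or concave) in the natural variable $u=x^{2}+y^{2}$ — which is what keeps $\det\operatorname{Hess}\Phi$ away from $0$ and makes the planar van der Corput bound effective. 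It then remains to carry the arithmetic weight $r_2(k)$ through the estimate (replacing it by its mean value and controlling the fluctuation by the Dirichlet hyperbola method, so the $k$–sum is not discarded trivially), and to excise the equatorial band, where $\rho'\to0$, and the polar caps, where $\rho^{2}$ has its corners; these regions, which after the appropriate Poisson step correspond to frequency vectors lying nearly in the equatorial plane or nearly along the axis, contribute only lower–order terms, the Gaussian curvature of $\partial\mathbb E$ (equal to $f_i''(0)^{2}$ at the poles) being nonzero there as well. Balancing all of this against $N\asymp R^{5/4}$ produces the exponent $11/8$.

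The main obstacle is this last step — arranging the two–dimensional van der Corput estimate with the weight $r_2(k)$ present and with the non–uniformities at the equator and the poles under control, and, above all, pinning down that it is $\tfrac1r f_i'''\neq0$ (strict curvature of the profile in $x^{2}+y^{2}$) that furnishes the needed $\det\operatorname{Hess}\Phi\neq0$. When that condition fails, the Hessian degenerates along a curve in the $(k,n_3)$–plane, the planar estimate breaks down, and one has to understand the Diophantine behaviour of the Taylor coefficients of $\Phi$ at the degeneracy — which is exactly the new ingredient the present paper must supply.
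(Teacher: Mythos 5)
Your proposal takes a genuinely different route from the paper: you slice $\mathbb{E}$ horizontally, apply the planar truncated Vorono\"i formula to each slice, and aim for a two-dimensional van der Corput estimate with phase $\sqrt{k}\,R\,\rho(n_3/R)$ over the $(k,n_3)$ plane. The paper (building on Chamizo 1998) instead applies a three-dimensional Hardy--Vorono\"i formula to the full body, obtaining a double sum over the \emph{frequency} variables $(n,m)$, both of length $\ll R^{5/8}$, with phase $Rh(n,m)=Rg(\sqrt{n},0,m)$ involving the support function; it then removes $r_2(n)$ by a Weyl (Cauchy--Schwarz differencing) step in the $m$-variable and applies one-dimensional van der Corput in $n$. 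The two methods are related by duality, and you correctly identify the geometric content of the hypothesis. Indeed your identity
\[
\widetilde f_i''(u)=\frac{1}{4r^{3}}\int_0^{r}s^{2}\,\frac{f_i'''(s)}{s}\,ds,\qquad r=\sqrt u,
\]
shows that $\frac1r f_i'''\neq0$ forces $(\rho^2)''\neq0$, which is the determinant of your Hessian; the paper's version of the same phenomenon is that the \emph{support-side} phase satisfies $F''(u)\neq0$ for $F(u)=f(\phi(\sqrt u))$, $\phi=(-f')^{-1}$, because $F''$ is $f'''(\phi(\sqrt u))$ divided by a never-vanishing factor (Lemma~\ref{lem:F2}). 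So the non-vanishing condition has the same r\^ole in both set-ups.

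There is, however, a genuine gap at the point you yourself flag as the main obstacle: the treatment of $r_2(k)$. ``Replacing it by its mean value and controlling the fluctuation by the Dirichlet hyperbola method'' does not work as stated --- if you write $r_2(k)=\pi+\bigl(r_2(k)-\pi\bigr)$, the fluctuating part is precisely a differenced version of the circle-problem error $\Delta$ you were trying to estimate, so nothing is gained; and the hyperbola method is a device for \emph{opening} a divisor sum, not for discarding the arithmetic weight. What is actually needed is one of two things: either open $r_2(k)=4\sum_{d\mid k}\chi_4(d)$ and reorganize as a sum over $(d,m,n_3)$ with phase $\sqrt{dm}\,R\,\rho(n_3/R)$ (the Hua--Kolesnik line of attack, which the paper's introduction explicitly calls technically painful and which historically required its own non-vanishing checks), or perform a Weyl step --- Cauchy--Schwarz after differencing in the direction in which $r_2$ is constant ($n_3$ in your parameterization, $m$ in the paper's), which is exactly Proposition~\ref{prop:Weyl}. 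Without one of these, the two-dimensional van der Corput estimate simply cannot be invoked on the weighted sum. A secondary gap is the claim that the polar caps and equatorial band ``contribute only lower-order terms'': near the poles the phase $\sqrt k\,R\,\rho$ has a square-root singularity in $n_3$ (it is $\rho^2$, not $\rho$, that is smooth there), and this needs an actual excision estimate, while the final optimization showing the exponent $11/8$ is not carried out. None of these is fatal to the strategy, but as written the argument stops short of a proof.
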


This result is not entirely satisfactory, as the extra hypothesis concerning the non-vanishing of the function $\frac 1r f_i'''(r)$ lacks geometrical meaning. This kind of technical hypothesis, which often appear when applying van der Corput's estimates, are usually very difficult to deal with. The problem worsened when multiple exponent pairs were introduced. For instance, until the arrival of the discrete Hardy-Littlewood method \cite{huxley_book}, stating and checking non-vanishing conditions was a substatial task even for the circle and divisor problems (see \cite{hua} and \cite{kolesnik}). The problem still persists when $d>2$. In this article, however, we prove

\begin{theorem}\label{thethm}
Let $\mathbb{E}\subset\mathbb{R}^3$ be a body of revolution as before and suppose that the functions $f_i$ are real analytic for $0 \leq r < r_{\infty}$ and $i = 1, 2$. Then the inequality $\alpha_\mathbb{E} \leq 11/8$ holds. 
\end{theorem}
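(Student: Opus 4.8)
The plan is to run the argument of \cite{chamizo}, isolating the finitely many radii at which its technical hypothesis fails and treating those with a new arithmetic ingredient. As in \cite{chamizo}, a truncated Poisson summation formula reduces the estimation of $\mathcal E(R)$ to bounding the exponential sums arising from $\sum_{0<|\vec n|\le N}R^3\widehat{\chi_{\mathbb E}}(R\vec n)$ for a suitable $N$. Since $\mathbb E$ is a body of revolution, $\widehat{\chi_{\mathbb E}}(\vec\xi)$ depends only on $\xi_3$ and $\rho=\sqrt{\xi_1^2+\xi_2^2}$ and is given by a Bessel integral in $f_1,f_2$; integrating in $z$, inserting the asymptotic expansion of $J_0$, and applying stationary phase in the radial variable produces, for $\vec\xi$ outside a neighbourhood of the $\xi_3$-axis and of the plane $\xi_3=0$,
\[
  \widehat{\chi_{\mathbb E}}(\vec\xi)=\sum_{i=1,2}\Big(A_i(\vec\xi)\,e^{2\pi i\Psi_i(\vec\xi)}+\overline{A_i(\vec\xi)\,e^{2\pi i\Psi_i(\vec\xi)}}\Big)+(\text{lower order}),
\]
where $\Psi_i$ is homogeneous of degree one and, as a function of the ``latitude'' $u=\rho/\xi_3$, is real analytic, being obtained from $f_i$ through a Legendre-type transform; thus the analyticity of $f_i$ on $[0,r_\infty)$ is inherited by $\Psi_i$. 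Writing $m=n_1^2+n_2^2$, the resulting sums take the shape $\sum_{m}r(m)\sum_{n_3}B_i(m,n_3)\,e^{2\pi iR\Phi_i(m,n_3)}$, with $r(m)$ the number of representations of $m$ as a sum of two squares.

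On the range where $\partial_u^3\Phi_i$ is bounded away from zero --- which corresponds exactly to the hypothesis $\frac1r f_i'''\neq0$ of \cite{chamizo} --- the treatment of \cite{chamizo} applies verbatim: one estimates the inner sum by van der Corput's bounds and exponent pairs, and the outer sum by Cauchy--Schwarz together with $\sum_{m\le M}r(m)^2\ll M\log M$, obtaining the contribution $O(R^{11/8})$. Now $\frac1r f_i'''(r)$ is real analytic on $[0,r_\infty)$ (the quotient makes sense there because $f_i$ is an analytic function of $r^2$), and it cannot vanish identically: as for the sphere, $|\frac1r f_i'''(r)|\to\infty$ as $r\to r_\infty^-$, forced by the vertical tangent of $\partial\mathbb E$ at the equator. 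Hence its zeros in $[0,r_\infty)$ are finite in number, and only finitely many ``bad'' latitudes $u_0$ require a separate argument.

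Fix such a bad latitude $u_0$; by analyticity $\partial_u^3\Phi_i$ vanishes there to some finite order $k\ge1$. Decompose the relevant summation variable dyadically by $|u-u_0|\asymp\delta$: on such a block $|\partial_u^3\Phi_i|\asymp\delta^k$, so van der Corput's third-derivative test (or a suitable higher exponent pair) applies with the correct power of $\delta$, while the innermost block $|u-u_0|\lesssim R^{-c}$ is handled trivially. Summing over $\delta$ would reproduce $O(R^{11/8})$ but for the possibility that, for many outer values, the stationary point of the inner phase lands inside the degenerate block. Excluding this is the new point: the clustering of stationary points around $u_0$ is governed by the Diophantine nature of $u_0$, equivalently of the ratios of the lowest-order Taylor coefficients of $\Phi_i$ at $u_0$. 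If $u_0^2$ is irrational one shows that the number of $\vec n\in\mathbb Z^3$ with $|\vec n|\le N$ whose latitude lies within $\epsilon$ of $u_0$ is $O(\epsilon N^2 N^{o(1)})$, which makes the dyadic blocks sum to $O(R^{11/8})$; if $u_0^2$ is rational the lattice can resonate, but with period controlled by its (fixed) denominator, and the resonant part is evaluated directly, using that the first Taylor coefficient of $\Phi_i$ at $u_0$ beyond the quadratic term supplies genuine curvature. In either case the bad latitudes contribute $O(R^{11/8})$, and together with the previous step this gives $\alpha_{\mathbb E}\le11/8$.

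The delicate part is this last step: proving the lattice-point count near a ``bad cone'', which intertwines rational approximation to $u_0^2$ with the behaviour of $r(m)$ on short intervals; making the rational/irrational dichotomy uniform in $\mathbb E$ and $R$; and gluing the higher-order van der Corput estimates for the individual blocks so that the total loss is absorbed into $R^{o(1)}$. The remaining ingredients follow, with some care, the template of \cite{chamizo}.
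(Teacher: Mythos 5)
Your general plan --- reduce via a truncated Poisson/Hardy--Vorono\"i formula to an exponential sum, slice dyadically in the angular variable around the finitely many zeros of $f_i'''$ (finitely many by analyticity plus the blow-up at the equator, which is also how the paper effectively sees it via $F''(u)\asymp u^{-5/2}$ at infinity), and apply van der Corput away from those zeros --- does match the paper. But the new ingredient near the zeros is not what the paper uses, and as you've sketched it I don't think it works.

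First, the mechanism: on a dyadic block where $|u-u_j|\asymp\delta$ is tiny, the difficulty is not that a stationary point of the inner phase lands in the degenerate block. After the Weyl step the inner phase in $n$ has derivative close to the nearly-constant quantity $\phi_\ell(n,m)\approx C_jR\ell/(m(m+\ell))$, with $C_j=F'(u_j)\neq0$ a Taylor coefficient of the Fourier-side phase (Proposition~\ref{prop:h3}). The sum over $n$ is then controlled by the Kuzmin--Landau first-derivative test, which fails precisely when this quantity is close to an integer. The question is therefore: \emph{for how many $m\in[M,2M)$ is $C_jR\ell/(m(m+\ell))$ within $x$ of $\mathbb{Z}$?} This is resolved (Proposition~\ref{prop:AB2}) by observing that the condition forces $m$ to divide an integer in a short interval around $C_jR\ell$, so the divisor bound gives $O(R^{\epsilon}(1+M^2x))$ admissible $m$, \emph{uniformly} in $C_j$. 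There is no rational/irrational dichotomy at all; the argument never distinguishes cases according to the arithmetic of $u_j$, $u_j^2$, or $C_j$, which is what makes it effective and uniform in $\mathbb{E}$ and $R$.

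Second, the specific claims you substitute for this look wrong. The Diophantine object in play is $C_j=F'(u_j)$, not $u_0$ or $u_0^2$, so a dichotomy on $u_0^2\in\mathbb{Q}$ is aimed at the wrong quantity. And your lattice-point count --- that the number of $\vec n$ with $|\vec n|\le N$ and latitude within $\epsilon$ of $u_0$ is $O(\epsilon N^2N^{o(1)})$ when $u_0^2$ is irrational --- is off by a factor of $N$: a cone of angular width $\epsilon$ occupies a $\asymp\epsilon$ fraction of solid angle and typically holds $\asymp\epsilon N^3$ lattice points regardless of irrationality. So even if the dichotomy were the right shape, the quantitative input you invoke is not available, and the ``delicate'' step you flag is exactly where the proof needs the divisor-count idea rather than an equidistribution-near-a-cone idea. (You would also want the spacing-type count of Proposition~\ref{prop:AB1} in the intermediate range $R^{-5/(8d_j+8)}\gg U\gg R^{-5/(4d_j+24)}$, which has no analogue in your sketch.)
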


In fact much less is needed as evident from the proof below. Theorem~\ref{thethm} holds as long as every zero of the function $f_i'''$ is of finite order for both $i = 1, 2$, \emph{i.e.} $f_i'''(r) = 0$ implies we can find an integer $n > 3$ such that $f_i^{(n)}(r) \neq 0$.
As remarked in the next section, the result also holds if in the definition \eqref{bor} we take $r=\sqrt{Q(x+\alpha,y+\beta)}$ with $Q$ a positive definite rational quadratic form and $\alpha,\beta\in\mathbb{R}$. In other words, Theorem~\ref{thethm} extends to the case in which the horizontal sections are rational ellipses with a common center when projected onto the $xy$-plane.

The idea of the proof is the following: we transform the problem via Poisson summation into estimating an exponential sum, as it is customary; and then slice the sum diadically in pieces corresponding to the zeros of $f_i'''(r)$. For the pieces where van der Corput's estimation falls short the phase is almost linear, and we are in position to apply the first derivative test (Kuzmin-Landau inequality \cite{grahamkolesnik}). This, by itself, is not good enough, as the derivative of the phase function might happen to be close to an integer way too often. Showing that this cannot be the case requires --in some ranges-- studying certain diophantine properties of the Taylor coefficient in question. This goes beyond the utterly analytic treatment  in the classical (van der Corput's) theory of exponential sums and vaguely resembles to the situation in \cite{bombieriiwaniec} (the seminal paper for the discrete Hardy-Littlewood method) in which the arithmetic properties of the Taylor coefficients play a fundamental role.

It is worth mentioning that while looking for examples where the non-vanishing condition is blatantly violated, the authors were led to the case of revolution paraboloids (and more generally elliptic paraboloids) for which it turns out that one can explicitely determine $\alpha_\mathbb{E}$ by relating the associated exponential sum to some analytic functions enjoying automorphic properties \cite{chamizopastor}. In some sense a related phenomenon is happening here, as very close to a zero of $\frac 1r f_i'''(r)$ the function $f_i(r)$ essentially looks like a parabola, and some of the arithmetic leaks in in the form of the aforementioned diophantine properties of the Taylor coefficient.

Throughout this article we use Vinogradov's notation $f \ll g$ with the same meaning as Landau's $f = O(g)$, \emph{i.e.}, $|f| \leq C|g|$ for an unspecified constant $C$. We employ $f \gg g$ to denote $g = O(f)$, and $f \asymp g$ when both of these conditions hold. We also abbreviate $e^{2\pi i t}$ by $e(t)$.

\section{The exponential sum}

Our starting point will be the truncated Hardy-Vorono\"i formula given by Proposition~2.1 of \cite{chamizo}. We restate it here for convenience. Fix a smooth even function $\eta \in C^\infty_0\big((-1, 1)\big)$ with $\eta(0) = 1$ and satisfying that the Fourier transform of $\eta(\|\vec{x}\|)$ is positive (this latter condition can be easily fulfilled by considering the convolution of a radial function with itself).

\begin{proposition}\label{pr:hv}
Let $\mathbb{E} \subset \mathbb{R}^3$ be a smooth convex body, $\eta$ as before, and fix $\epsilon > 0$ and $0 < c < 1$. Then for any given $R > 2$ there exists $R' \in \big(R-2, R+2\big)$ such that
\[
  \mathcal{E}(R) = 
  - \frac{R'}{\pi}
  \sum_{\vec{n} \in \mathbb{Z}^3 - \{0\}}
    \eta\big( \delta \|\vec{n}\| \big)
    \frac{ 
      \cos\big( 2 \pi R' g(\vec{n}) \big) 
    }{
      \|\vec{n}\|^2 \sqrt{\kappa(\vec{n})}
    }
  + O\big(R^{2 + \epsilon}\delta\big),
\]
where $\delta = R^{-c}$, $g$ is the \emph{support function} $g(\vec{n}) = \sup \{\vec{x} \cdot \vec{n} \, : \, \vec{x} \in \mathbb{E} \}$ and $\kappa(\vec{n})$ stands for the Gaussian curvature of $\partial \mathbb{E}$ at the point whose unit outer normal is $\vec{n}/\|\vec{n}\|$.
\end{proposition}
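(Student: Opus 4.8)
The plan is to derive this as a truncated Hardy--Voronoï (Hlawka) formula for the discrepancy of a smooth convex body, resting on three ingredients: (i) a mollification of $\chi_{R\mathbb{E}}$ that legalises Poisson summation and cuts the dual side at $\|\vec{m}\| < 1/\delta$; (ii) the large-argument asymptotics of $\widehat{\chi_\mathbb{E}}$, obtained via the divergence theorem followed by two-dimensional stationary phase on the Gauss sphere; and (iii) a bookkeeping step in which the freedom in the radius $R'$ absorbs the mismatch between the expanded and contracted copies of $\mathbb{E}$. The output is the claimed identity with the cosine and the constant $-1/\pi$ coming, respectively, from pairing the two critical points and from the product of the two opposite Morse signatures with $1/i$.

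First I would fix the mollifier: let $\psi$ be the inverse Fourier transform of $\vec{\xi}\mapsto\eta(\|\vec{\xi}\|)$, renormalised so $\int\psi=1$. The hypothesis on $\eta$ makes $\psi$ a positive Schwartz function, and $\psi_\delta(\vec{x}):=\delta^{-3}\psi(\vec{x}/\delta)$ satisfies $\widehat{\psi_\delta}(\vec{m})=\eta(\delta\|\vec{m}\|)$, supported in $\|\vec{m}\|<1/\delta$. Since $\mathbb{E}$ contains a ball of radius $\rho>0$ about an interior point, convexity gives $R\mathbb{E}+B(\vec 0,t)\subseteq(R+t/\rho)\mathbb{E}$, so with $\delta_1=\delta/\rho\asymp\delta$ one compares $\chi_{R\mathbb{E}}$ with $\chi_{(R\pm\delta_1)\mathbb{E}}\ast\psi_\delta$; because $\psi_\delta$ is only concentrated near the origin rather than compactly supported, this comparison needs a standard auxiliary device (splitting $\psi$ into a unit-ball part and a rapidly decaying tail, or picking a favourable $R'$ by averaging), which is exactly where the auxiliary radius and a genuine $R^\epsilon$ loss will enter. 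Poisson summation then gives
\[
  \sum_{\vec{n}\in\mathbb{Z}^3}(\chi_{T\mathbb{E}}\ast\psi_\delta)(\vec{n})
  = T^3|\mathbb{E}| + T^3\!\!\sum_{0<\|\vec{m}\|<1/\delta}\!\!\widehat{\chi_\mathbb{E}}(T\vec{m})\,\eta(\delta\|\vec{m}\|),
\]
the $\vec{m}=\vec 0$ term being $T^3|\mathbb{E}| = R^3|\mathbb{E}| + O(R^2\delta)$.

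Next I would compute $\widehat{\chi_\mathbb{E}}(\vec{\xi})$ for large $\|\vec{\xi}\|$. Writing $e(-\vec{x}\cdot\vec{\xi})$ as a divergence and applying Gauss's theorem,
\[
  \widehat{\chi_\mathbb{E}}(\vec{\xi})
  = \frac{-1}{2\pi i\|\vec{\xi}\|^2}\int_{\partial\mathbb{E}}\big(\vec{\xi}\cdot\vec{\nu}(\vec{x})\big)\,e(-\vec{x}\cdot\vec{\xi})\,dS(\vec{x}),
\]
with $\vec{\nu}$ the outer unit normal. Parametrising $\partial\mathbb{E}$ by the inverse Gauss map $\vec{u}\in S^2\mapsto X(\vec{u})\in\partial\mathbb{E}$ (well defined by strict convexity), the surface measure acquires a factor $\kappa(\vec{u})^{-1}$ and the phase becomes $-\|\vec{\xi}\|\,X(\vec{u})\cdot\widehat{\vec{\xi}}$, which has exactly the two nondegenerate critical points $\vec{u}=\pm\widehat{\vec{\xi}}$; there $X(\pm\widehat{\vec{\xi}})\cdot\widehat{\vec{\xi}}=\pm g(\pm\widehat{\vec{\xi}})$, the Morse signature is $\pm2$, and the Hessian determinant of the phase equals $\kappa(\pm\widehat{\vec{\xi}})^{-1}$. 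Two-dimensional stationary phase, uniform in $\widehat{\vec{\xi}}$ because $\kappa$ is bounded below, then yields
\[
  \widehat{\chi_\mathbb{E}}(\vec{\xi})
  = \frac{-1}{2\pi\|\vec{\xi}\|^2}\Big(\frac{e(-\|\vec{\xi}\|g(\widehat{\vec{\xi}}))}{\sqrt{\kappa(\widehat{\vec{\xi}})}}+\frac{e(\|\vec{\xi}\|g(-\widehat{\vec{\xi}}))}{\sqrt{\kappa(-\widehat{\vec{\xi}})}}\Big)+O(\|\vec{\xi}\|^{-3}).
\]
Substituting this with $\vec{\xi}=T\vec{m}$, using that $g$ is positively $1$-homogeneous and $\kappa$ is $0$-homogeneous so $\|\vec{\xi}\|g(\pm\widehat{\vec{\xi}})=g(\pm\vec{m})$ and $\kappa(\widehat{\vec{\xi}})=\kappa(\vec{m})$, and collapsing the two sums via $\vec{m}\mapsto-\vec{m}$ in the second, the exponential pair becomes $2\cos(2\pi Tg(\vec{m}))$; the remainder contributes $\ll T^3\sum_{\|\vec{m}\|<1/\delta}(T\|\vec{m}\|)^{-3}\ll\log R\ll R^\epsilon$. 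Hence the dual sum equals $-\tfrac{T}{\pi}\sum_{\vec{n}\neq\vec 0}\eta(\delta\|\vec{n}\|)\|\vec{n}\|^{-2}\kappa(\vec{n})^{-1/2}\cos(2\pi Tg(\vec{n}))+O(R^\epsilon)=:\Sigma(T)+O(R^\epsilon)$, the truncation being automatic since $\eta(\delta\|\vec{n}\|)=0$ for $\|\vec{n}\|\geq1/\delta$.

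Finally I would assemble: the comparison of step (i) gives $\mathcal{E}(R)\leq\Sigma(R+\delta_1)+O(R^{2+\epsilon}\delta)$ and $\mathcal{E}(R)\geq\Sigma(R-\delta_1)+O(R^{2+\epsilon}\delta)$; since $T\mapsto\Sigma(T)$ is a finite sum of continuous functions, the intermediate value theorem produces $R'\in[R-\delta_1,R+\delta_1]\subset(R-2,R+2)$ at which $\Sigma(R')$ sits within $O(R^{2+\epsilon}\delta)$ of $\mathcal{E}(R)$, which is the claim. The hard part will be twofold. One is making the stationary-phase analysis on $S^2$ fully rigorous with an error term uniform in the direction $\widehat{\vec{\xi}}$ near both critical points, which is where the $C^\infty$ regularity and the uniform lower bound on $\kappa$ must be spent and where the identity $\det(\mathrm{Hess})=1/\kappa$ has to be checked. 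The other is the mollification bookkeeping: the boundary shell of width $\asymp\delta$ about $\partial(R\mathbb{E})$ carries $\asymp R^2\delta$ lattice points, so the comparison inequalities and the tails of $\psi_\delta$ must be arranged so that this count is reflected precisely in the $O(R^{2+\epsilon}\delta)$ term — in particular without the positive-but-spread-out $\psi_\delta$ forcing a spurious multiplicative constant into the main term $R^3|\mathbb{E}|$ — which is the real reason the statement is phrased with an auxiliary $R'$ rather than $R$ itself.
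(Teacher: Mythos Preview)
Your outline is a correct sketch of the standard Hlawka--type argument (mollify, Poisson summation, divergence theorem plus two-dimensional stationary phase on the Gauss sphere, then an intermediate-value selection of $R'$), and the constants and signs land where they should. There is, however, nothing to compare it to: the paper does not prove this proposition at all but simply restates it as Proposition~2.1 of \cite{chamizo} and uses it as a black box, so your write-up is in fact \emph{more} than what the present paper offers. If anything, you could shorten your account to a one-line citation, which is exactly what the authors do.
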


To obtain an error term of the form $O\big(R^{11/8 + \epsilon}\big)$ we must choose $c \geq 5/8$, and the larger we pick $c$ the longer the exponential sum we have to bound becomes. The natural choice is therefore $c = 5/8$ and $\delta = R^{-5/8}$.

All the functions of $\vec{n}$ involved in the expression for $\mathcal{E}(R)$ given by Proposition~\ref{pr:hv} are invariant under rotations on the first two variables. Grouping the corresponding terms and applying summation by parts,
\begin{equation}\label{eq:parts}
  \mathcal{E}(R) \ll
  \sup_{ N, M^2 \leq \delta^{-2}}   
    \frac{R^{1+\epsilon}}{N + M^2} 
    \left| 
      \sum_{1 \leq n \leq N} 
        \sum_{1 \leq |m| \leq M} 
          r_2(n) e\big(R'h(n, m)\big)
    \right|    
  + R^{11/8 + \epsilon},
\end{equation}
where $h(n, m) = g\big(\sqrt{n}, 0, m\big)$ and $r_2(n)$ stands for the number of representations of $n$ as sum of two squares (the contribution corresponding to the terms with $n = 0$ or $m = 0$ is negligible). The summation by parts step is justified as long as we can guarantee
\[\int_1^{\delta^{-2}}
 \int_1^{\delta^{-1}} 
 (u + v^2) 
 \left| 
   \frac{\partial^2}{\partial u \partial v} 
   \frac{\eta\big(\delta \sqrt{u+v^2}\big)}
   {(u+v^2) \sqrt{\kappa(\sqrt{u}, 0, v) } }
 \right| du dv
 \ll R^\epsilon,
\]
and a similar bound with $v$ replaced with $-v$. This becomes evident after performing the change of variables $u \mapsto u^2$ and changing to polar coordinates (note $\kappa(u, 0, v)$ depends smoothly on the angle $\theta = \arctan v/u$).

When $r$ in \eqref{bor} is replaced by $r=\sqrt{Q(x+\alpha,y+\beta)}$ with $Q$ a positive definite rational quadratic form, as mentioned in the introduction, rescaling $f_i$ we can assume that the \emph{dual form $Q^*$} (the one having the inverse matrix) is an integral quadratic form. By the properties of the Fourier transform, \eqref{eq:parts} still holds but replacing $r_2(n)$ by
\[
 r_2^*(n)=\frac{1}{\det Q}
 \sum_{Q^*(x,y)=n}e(\alpha x+\beta y)
 \qquad\text{where $x,y\in\mathbb{Z}$}.
\]
See \S6 of \cite{chamizo} for more details. In what follows the only fact that will be used about the arithmetic function $r_2$ is the bound $r_2(n) \ll n^\epsilon$ for any $\epsilon > 0$, also satisfied by $r_2^*(n)$. Therefore all the forthcoming arguments may be readily applied in the context of rational elliptic sections.

If instead of the convex body $\mathbb{E}$ we consider its specular reflection over the plane $z = 0$, we arrive at exactly the same expression \eqref{eq:parts} but with the sign of $m$ reversed (because the invariance of the Fourier transform by symmetries). This means that we can restrict our attention to the half of the sum consisting of those terms with $m > 0$, and then apply the same argument to the specular reflection of $\mathbb{E}$ to bound the other half in the same way. In what follows, therefore, we restrict $m$ to be positive, and rename $f_1$ to $f$ to avoid the excessive use of subindices.

Let $0 = r_0 < r_1 < \cdots < r_{j_0-1}$ be the zeros of $f'''$ in $[0,r_\infty)$ and fix any $r_{j_0}$ satisfying $r_{j_0-1}<r_{j_0}<r_\infty$. Denote $u_j=(f'(r_j))^2$ for $0\le j\le j_0$. We are going to split the summation domain of \eqref{eq:parts} dyadically in $m$ as $m \rightarrow +\infty$, and in $n/m^2$ as it approaches either some $u_{j}$ or $+\infty$ (see figure below); decomposing the sum in an at most a constant times $\log{R}$ number of pieces of the form
\begin{equation}\label{eq:S}
  S(U_1, U_2, M) =
  \mathop{\sum \sum}_{\substack{
    U_1 \leq n/m^2 < U_2 \\
    M \leq m < 2M 
  }     }
    r_2(n) e\big(Rh(n, m)\big).
\end{equation}

\begin{center}
\includegraphics[height=46pt, keepaspectratio=true]{imag2.eps}
\end{center}

It is clear that after this dyadic subdivision, we can deduce Theorem~\ref{thethm} from the following theorems. The part  $0\le n/m^2<u_{j_0}$ of the double sum in \eqref{eq:parts} is covered by Theorem~\ref{thm:s1}, except for the terms with $n/m^2=u_j$ that can be estimated trivially, and the part $n/m^2\ge u_{j_0}$ is covered by Theorem~\ref{thm:s2}.

\begin{theorem}\label{thm:s1}
Given $\epsilon > 0$ and $0\le j<j_0$, for any $R > 1$, $2M \leq R^{5/8}$
and $0<U\le (u_{j+1}-u_j)/4$ we have
\[
 \big|S(u_{j+1} - 2U, u_{j+1} - U, M)\big|
 +
 \big|S(u_{j} + U, u_{j} + 2U, M)\big|
 \ll M^2R^{3/8+\epsilon}.
\]
\end{theorem}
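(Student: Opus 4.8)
The plan is to estimate $S(U_1,U_2,M)$ in the two dyadic ranges abutting a zero $r_j$ of $f'''$ by combining van der Corput's second-derivative test with the Kuzmin--Landau first-derivative test, according to whether the relevant piece of the phase is genuinely curved or nearly linear. First I would record the local behaviour of $h(n,m)=g(\sqrt n,0,m)$ near $n/m^2=u_j$. By the Legendre-transform description of the support function, $h(n,m)=m\,\psi(n/m^2)$ for a smooth function $\psi$ determined by $f$, and the key identity is that $\partial^2 h/\partial n^2$ is, up to a smooth non-vanishing factor and a power of $m$, essentially $\big(\tfrac1r f'''(r)\big)^{-1}$ evaluated at the radius $r$ corresponding to $n/m^2$; thus it degenerates precisely at $r=r_j$. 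Since the $f_i$ are analytic (or more weakly, every zero of $f'''$ has finite order), near $r_j$ we have $\tfrac1r f'''(r)\asymp |r-r_j|^{k}$ for some integer $k\ge 1$, hence on the strip $u_j+U\le n/m^2\le u_j+2U$ one gets $\partial^2 h/\partial n^2\asymp R^{-1} M^{-3} U^{-k/?}$ — more precisely a clean power of $U$ times $M^{-3}$, and correspondingly the span of $\partial h/\partial n$ across the strip is comparable to a fixed power of $U$ times $M^{-1}$.

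Next I would split into two regimes. In the regime where $U$ is not too small, the second derivative is large enough that van der Corput's estimate (applied in the $n$-variable for each fixed $m$, then summed in $m$, absorbing $r_2(n)\ll n^\epsilon$) already yields a bound of the shape $M^2R^{3/8+\epsilon}$; this is exactly the mechanism behind the $11/8$ exponent in \cite{chamizo}, and here it applies verbatim away from $r_j$. The point at which van der Corput falls short is precisely the regime where $U$ is small — the phase in $n$ is then almost linear — and there I would instead apply the Kuzmin--Landau inequality: for fixed $m$ the sum over $n$ in a dyadic block is $\ll \|\partial h/\partial n\|^{-1}$ (distance to the nearest integer) provided $\partial h/\partial n$ stays away from integers and is slowly varying, which holds because $\partial^2 h/\partial n^2$ is small in this regime. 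Summing the trivial $O(\text{length})$ bound when $\partial h/\partial n$ is close to an integer, and the Kuzmin--Landau bound otherwise, over the $O(M)$ values of $m$, gives the claimed estimate — \emph{unless} $\partial h/\partial n$ is close to an integer for too many pairs $(n,m)$.

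The main obstacle, therefore, is controlling how often $R\,\partial h/\partial n(n,m)$ can lie near $\mathbb Z$ when $n/m^2$ is within $O(U)$ of $u_j$ and $M\le m<2M$. This is where the Diophantine input announced in the introduction enters: near $r_j$ the generatrix looks like a parabola, so $\partial h/\partial n$ has a Taylor expansion in $(n/m^2-u_j)$ whose leading coefficient is, after normalisation, a fixed real number $c_j$ built from the Taylor coefficients of $f$ at $r_j$; the number of $(n,m)$ for which $R\partial h/\partial n$ is within $\theta$ of an integer is then governed by how well $R c_j$ (and lower-order data) can be approximated by rationals with denominators of size comparable to $m\asymp M$. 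I would treat this as a counting problem for solutions of $\|R\,\phi(n,m)\|<\theta$ with $\phi$ the normalised phase derivative, splitting according to the arithmetic nature of the leading coefficient: if it is "sufficiently irrational" on the relevant scale one gets equidistribution and the expected count; if it happens to be close to a rational $a/q$ then the near-integer pairs organise into $O(1)$ arithmetic progressions in $n$ for each $m$, which one counts directly and feeds back into the Kuzmin--Landau/trivial dichotomy. Balancing the threshold $\theta$ against $U$, $M$ and the order $k$ of the zero, and checking that in every case the total is $\ll M^2R^{3/8+\epsilon}$, is the technical heart of the argument; the analytic parts (the stationary-phase-free second-derivative estimate and Kuzmin--Landau) are standard once the local expansion of $h$ near $r_j$ has been set up carefully.
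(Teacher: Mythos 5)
The central gap is at the very first step: you cannot apply van der Corput or Kuzmin--Landau directly to $\sum_n r_2(n)\,e\big(Rh(n,m)\big)$ and ``absorb $r_2(n)\ll n^\epsilon$.'' Those lemmas estimate \emph{unweighted} exponential sums; $r_2(n)$ is an erratic arithmetic weight, not smooth or monotone, so neither term-by-term absorption nor partial summation removes it. The paper's first move (Proposition~\ref{prop:Weyl}) is a Weyl/Cauchy--Schwarz differencing step that is precisely the device for eliminating $r_2$: squaring $S$ and applying Cauchy--Schwarz over a short shift $\ell\le L$ in $m$ replaces the problem by bounding
\[
 T=\frac1L\sum_{1\le\ell\le L}\Bigl|\sum\!\!\sum e\big(R(h(n,m+\ell)-h(n,m))\big)\Bigr|,
\]
at the cost of a diagonal term, and every subsequent derivative test in the paper is applied to the \emph{differenced} phase $h(n,m+\ell)-h(n,m)$, not to $h(n,m)$. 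Your plan never introduces this step, so the analytic machinery you invoke never legitimately applies to $S$.

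The omission also changes the Diophantine question in an essential way. After differencing, Proposition~\ref{prop:h3} shows the $n$-derivative of the phase has leading term $C_jR\ell/(m(m+\ell))$ near $u_j$: a fraction whose denominator is $\approx m^2$. The paper (Proposition~\ref{prop:AB2}) then counts the $m$ for which this is within $x$ of $\mathbb{Z}$ by noting $m$ must divide an integer in a short interval about $C_jR\ell$, and the divisor bound $d(N)\ll N^\epsilon$ gives $\ll R^\epsilon(1+M^2x)$ \emph{unconditionally}, with no case split on the arithmetic of $C_j$ or $R$. Your proposed dichotomy (``sufficiently irrational'' versus ``close to a rational $a/q$'') is a different and much vaguer argument, and it is not clear it closes: $R$ is a free real parameter, so the approximability of $RC_j$ can be anything, and the ``good'' case would need an equidistribution input you do not supply. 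Finally, a smaller but symptomatic slip: $\partial^2 h/\partial n^2$ is proportional to $F''\propto f'''$ (Lemma~\ref{lem:F2}), so it \emph{vanishes} at $r_j$; your formula $\big(\tfrac1r f'''\big)^{-1}$ has the wrong direction and would blow up there.
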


\begin{theorem}\label{thm:s2}
Given $\epsilon > 0$, for any $R > 1$, $2M \leq R^{5/8}$
and $u_{j_0}\le U\le R^{5/4}M^{-2}$ we have
\[
  S(U, 2U, M) \ll UM^2 R^{3/8 + \epsilon}.
\]
\end{theorem}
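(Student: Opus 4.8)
The plan is to estimate $S(U, 2U, M)$ by van der Corput's method, exploiting that on the range $n/m^{2}\ge u_{j_{0}}$ the situation is non-degenerate. Concretely, for $(n,m)$ in the summation domain the abscissa $r$ determined by $f'(r)=-\sqrt n/m$ lies in $[r_{j_{0}},r_{\infty})$, away from every zero of $f'''$; so, after dealing separately with the range where $r$ is very close to $r_{\infty}$ (there $\mathbb E$ is asymptotically sphere-like and the required bound is easy, as in \cite{chamizo}), one has $\partial_{n}^{3}h\asymp n^{-5/2}$ with a constant bounded away from zero on each dyadic block. Thus, unlike in Theorem~\ref{thm:s1}, there is no need for the first-derivative test nor for the accompanying Diophantine considerations.

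First I would record, via the $1$-homogeneity of the support function $g$ and the convexity of $\mathbb E$, the orders of magnitude $\partial_{n}^{j}h\asymp n^{1/2-j}$ and $\partial_{m}^{j}h\asymp n\,m^{-j-1}$, together with the non-vanishing of $\partial_{n}^{2}h$ and $\partial_{n}^{3}h$; and check that the claimed bound is trivial unless $R^{3/8}<M\le R^{5/8}/2$, the range in which the saving needed over the trivial bound $UM^{3}R^{\epsilon}$ is at most $R^{1/4}$. The heart of the matter is to combine the third-derivative (equivalently, exponent-pair) estimate with two further facts: the arithmetic weight $r_{2}(n)$ cannot simply be pulled out of the exponential sum, and it is not enough to estimate the sum over $n$ for each fixed $m$ and then sum trivially over the $\asymp M$ values of $m$. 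The approach I would take handles both at once using the rotational symmetry. Writing $S$ as a two-dimensional sum $\sum_{M\le m<2M}\sum_{(a,b)}e\big(Rg(a,b,m)\big)$ over an annulus in $(a,b)$ and applying Poisson summation (van der Corput's $B$-process) in $(a,b)$, the rotational symmetry collapses the dual sum again to one carrying the weight $r_{2}$, but now with a phase that is \emph{linear} in $m$; the $m$-sum then becomes a geometric series whose size is governed by how often $Rf(\sqrt k/R)$ approaches an integer. Estimating that frequency — which reduces to a van der Corput bound for $\sum_{k}e\big(jRf(\sqrt k/R)\big)$, legitimate here precisely because $f'''\neq 0$ keeps this phase non-degenerate — and summing over the $O(\log R)$ dyadic blocks, is what should produce $S(U,2U,M)\ll UM^{2}R^{3/8+\epsilon}$.

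The main obstacle is exactly this interplay: dealing with the weight $r_{2}$ and extracting cancellation in both summation variables simultaneously. The geometric input (non-vanishing of $f'''$ on $[r_{j_{0}},r_{\infty})$ and the separate treatment of the range near the equator) is routine; the delicate part is organising the Poisson/van der Corput steps so that the rotational symmetry is exploited twice — once to pass to a dual sum linear in $m$, once to control the dual phase $Rf(\sqrt k/R)$ modulo one — without losing a power of $R$ along the way, and keeping track of the exponents and error terms so that the bound comes out with the factor $UM^{2}$ rather than merely $M^{2}$ or $UM^{3}$.
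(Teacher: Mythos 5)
Your proposal takes a genuinely different route from the paper, and while the central geometric observation is in fact correct, the write-up contains a gap and an internal inconsistency, and it is far from a complete proof.

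The paper's proof of Theorem~\ref{thm:s2} is short and elementary: it removes $r_2(n)$ via the Weyl step (Proposition~\ref{prop:Weyl}), applies the second-derivative van der Corput test of Proposition~\ref{prop:vdC} to the differenced phase with $d_j=d_\infty=-5/2$, and then optimizes with $L=\min\{R^{1/2},M\}$. The whole argument fits in \eqref{eq:vdC}; no Poisson/$B$-process and no first-derivative test is needed, and there is no separate treatment of the equatorial range -- that case is absorbed automatically through the convention $d_\infty=-5/2$ and the asymptotics $F''(u)\asymp u^{-5/2}$ of Lemma~\ref{lem:F2}.

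Your route (undo the radial Poisson in $(a,b)$, i.e.\ a $B$-process, and then exploit that the dual phase is linear in $m$) is a different and more elaborate strategy. The key claim that the dual phase is linear in $m$ is true, and here is the reason your proposal omits: $g$ is $1$-homogeneous, so $\nabla g$ is $0$-homogeneous; at the stationary point $R\,\partial_a g=j_1$, $R\,\partial_b g=j_2$, Euler's identity gives $Rg(a^*,b^*,m)-j_1a^*-j_2b^*=Rm\,\partial_m g(a^*/m,b^*/m,1)$, and the stationary condition determines $(a^*/m,b^*/m)$ from $(j_1,j_2)$ alone. This yields $\theta(j_1,j_2)=\partial_m g=f(\sqrt k/R)$ with $k=j_1^2+j_2^2$, which matches your expression. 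However, two problems remain. First, there is an internal inconsistency: you state at the outset that ``there is no need for the first-derivative test nor for the accompanying Diophantine considerations,'' and then the heart of your method \emph{is} a first-derivative (Kuzmin--Landau) bound on $\sum_m e(Rm\theta)$ together with a count of how often $Rf(\sqrt k/R)$ is close to an integer -- precisely the Diophantine machinery you claimed to avoid, and precisely what the paper reserves for the delicate small-$U$ ranges of Theorem~\ref{thm:s1}. Second, the $B$-process brings back the weight $r_2$, the dual range in $(j_1,j_2)$ is $m$-dependent (the annular constraint $U\le (a^2+b^2)/m^2<2U$ moves with $m$), and the stationary-phase amplitudes and error terms must all be controlled; none of this is carried out, and the claim that it ``should produce'' the factor $UM^2$ rather than $UM^3$ is not substantiated. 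In short: the geometric insight is real and could plausibly be developed into an alternative proof, but the proposal as written has a self-contradiction, skips the computation that makes its key step true, and replaces the paper's two-line application of Propositions~\ref{prop:Weyl} and~\ref{prop:vdC} with a substantially harder and incomplete argument.
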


\section{Weyl step}

In order to be able to estimate the sum $S$ given by \eqref{eq:S} using the van der Corput method we must first get rid of the arithmetic function $r_2$. We do this by performing a so-called \emph{Weyl step} (\emph{cf.} \S8.2 of \cite{iwanieckowalski}).

\begin{proposition}\label{prop:Weyl}
Let $S$ as before and fix $\epsilon > 0$. For any $1 \leq M \leq R^{5/8}$, $0 < U_1 < U_2 \leq R^{5/4}$ and $1 \leq L \leq M$, satisfying 
$U_2 - U_1 = U$ and $U_2L + 1 \ll UM$, we have
\[
  \big|S(U_1, U_2, M)\big|^2 \ll
  R^\epsilon ({U^2M^6}L^{-1}
  + U M^3  T)
\]
where $T = T(U_1, U_2, M, L)$ is given by
\begin{equation}\label{eq:T}
  T = \frac 1L
  \sum_{1 \leq \ell \leq L}
  \vast|
    \mathop{\sum \sum}_{\substack{
     U_1 \leq n/(m+\ell)^2, n/m^2 < U_2 \\
      M \leq m, m + \ell < 2M 
    }}
    e\big(R(h(n, m + \ell) - h(n, m))\big)
  \vast|.
\end{equation}
\end{proposition}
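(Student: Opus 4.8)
The plan is to run the standard Weyl–van der Corput $B$-process on the sum $S(U_1,U_2,M)$ in order to remove the arithmetic weight $r_2(n)$. First I would split the range $M\le m<2M$ into $O(M/L)$ consecutive blocks of length $L$; on each block the outer variable $m$ runs over an arithmetic progression, and inside a block one shifts $m\mapsto m+\ell$ for $0\le \ell<L$. Writing $S$ as $\sum_n r_2(n)\big(\sum_m e(Rh(n,m))\big)$ and applying the Cauchy–Schwarz inequality in the $n$-variable (using $r_2(n)\ll n^\epsilon\ll R^\epsilon$ and that the number of relevant $n$ for fixed $m$ is $\ll UM^2$), one is led to bound
\[
 |S|^2 \ll R^\epsilon\, UM^2
 \mathop{\sum}_{n}
 \vast|\sum_{m}\, e\big(Rh(n,m)\big)\vast|^2,
\]
where the inner sum is over the admissible $m$ for that $n$. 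Expanding the square introduces a difference variable $\ell=m'-m$; the diagonal $\ell=0$ contributes $\ll UM^2\cdot M=UM^3$ terms, giving after reintroducing the weights the term $U^2M^6L^{-1}$ once we have restricted the length of the $m$-sum to $L$ via the block decomposition, and the off-diagonal $1\le|\ell|<L$ gives exactly the sum defining $T$, up to the symmetry $\ell\mapsto-\ell$ and swapping the roles of $m$ and $m+\ell$. The condition $U_2L+1\ll UM$ is what guarantees that for a fixed pair $(m,\ell)$ the set of $n$ with both $U_1\le n/(m+\ell)^2$ and $n/m^2<U_2$ is an interval of length $\ll UM^2$ (rather than being governed by the cruder bound $U_2M^2$), which is precisely what makes the count of diagonal terms come out as $UM^3$ and not worse; it also ensures the shifted summation ranges in \eqref{eq:T} are non-empty only in the intended regime.

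In slightly more detail: after the block decomposition write $S=\sum_{k}S_k$ where $S_k$ is the contribution of the $k$-th block, so by Cauchy–Schwarz $|S|^2\ll (M/L)\sum_k|S_k|^2$. Each $|S_k|^2$ is then handled by the van der Corput trick above with the $m$-sum of length $\le L$: Cauchy–Schwarz in $n$ followed by expanding the square and collecting the diagonal and off-diagonal contributions. Summing over the $O(M/L)$ blocks, the diagonal yields $(M/L)\cdot R^\epsilon UM^2\cdot(UM^2/L)\cdot L = R^\epsilon U^2M^5L^{-1}$ — and here one has to be a little careful with the bookkeeping of which factor of $M$ comes from where, but tracking the ranges shows the clean statement is $U^2M^6L^{-1}$ after accounting for the length of the $n$-range being $\ll UM^2$ uniformly — while the off-diagonal gives $(M/L)\cdot R^\epsilon UM^2 \cdot \sum_{1\le\ell\le L}|\cdots| = R^\epsilon UM^3\cdot\frac1L\sum_{\ell}|\cdots| = R^\epsilon UM^3 T$. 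Combining the two contributions gives the claimed bound $|S|^2\ll R^\epsilon(U^2M^6L^{-1}+UM^3T)$.

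The only genuinely delicate point — the rest being the routine mechanics of the $B$-process — is the treatment of the summation ranges: after the shift $m\mapsto m+\ell$ the constraint $U_1\le n/m^2<U_2$ becomes the two-sided constraint appearing in \eqref{eq:T}, and one must check that the hypothesis $U_2L+1\ll UM$ keeps the number of $n$ attached to each $(m,\ell)$ at $\ll UM^2$ and keeps the overcounting from the block boundaries under control. I expect this to be the main obstacle in writing the argument carefully, whereas the Cauchy–Schwarz steps and the extraction of the diagonal term are completely standard. No stationary phase or curvature information about $h$ is used at this stage; that is deferred to the estimation of $T$ in the subsequent sections.
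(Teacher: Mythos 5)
Your argument and the paper's are both incarnations of the Weyl/van der Corput $A$-process, but they diverge in the mechanics. You split the $m$-range into $O(M/L)$ blocks of length $L$, Cauchy--Schwarz over the blocks, then Cauchy--Schwarz in $n$ within each block, and expand the square. The paper instead uses the averaging identity $LS=\sum_{M-L\le m<2M}\sum_n r_2(n)\sum_{1\le\ell\le L}\psi_{n,m+\ell}$ (with $\psi_{n,m}$ the exponential truncated to the summation domain) and applies Cauchy--Schwarz jointly in the pair $(m,n)$, seeing at once that there are $\ll M\cdot UM^2=UM^3$ admissible pairs. This sidesteps two subtleties your version has to work around: (i) Cauchy--Schwarz in $n$ alone, taken over the full range $M\le m<2M$, faces a union of $n$-intervals of total length $\ll U_2M^2$, not $\ll UM^2$, so your first display for $|S|^2$ is unjustified as written and only becomes correct after you restrict to a block of length $L$ and invoke $U_2L+1\ll UM$ to shrink the swept $n$-range $[U_1m_k^2,U_2(m_k+L)^2)$; and (ii) because blocks exclude off-diagonal pairs $(m,m+\ell)$ straddling a boundary and the per-block absolute values do not recombine into a single one, your expansion produces a quantity that majorizes $T$ rather than $T$ itself, so you prove a formally weaker statement --- harmless in context, since Propositions~\ref{prop:vdC}, \ref{prop:AB1} and \ref{prop:AB2} in fact bound the double sum termwise in $(m,\ell)$, but a genuine mismatch with \eqref{eq:T}. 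Your reading of the hypothesis $U_2L+1\ll UM$ as controlling the $n$-range is essentially right, though it governs the swept range over a block (or, in the paper's version, $[U_1m^2,U_2(m+L)^2)$), not the interval for a single fixed $(m,\ell)$, whose length is $\le Um^2$ with no extra hypothesis needed. Modulo these fixable points and the intermediate bookkeeping slip you flag yourself, the approach is sound and lands on the same bound.
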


\begin{proof}
Consider
\[
  \psi_{n, m} = 
  \begin{cases}
    e\big(Rh(n, m)\big) & \text{if } U_1 \leq n/m^2 < U_2 \text{ and } M \leq m < 2M, \\
    0 & \text{otherwise}.
  \end{cases}
\]
It suffices to prove the inequality when $L$ is an integer. We may therefore write
\[
  LS = 
  \sum_{M-L \leq m < 2M} 
  \sum_{U_1 m^2 \leq n < U_2(m + L)^2}
  r_2(n)
  \sum_{1 \leq \ell \leq L} \psi_{n, m + \ell}.
\]
The length of the first sum is $\ll M$ and the length of the second one $\ll UM^2$, hence squaring and applying Cauchy-Schwarz,
\[
  L^2 S^2 \ll
  R^{\epsilon} UM^{3}
  \sum_{M-L \leq m < 2M} 
  \sum_{U_1 m^2 \leq n < U_2(m + L)^2}
  \sum_{1 \leq \ell_1, \ell_2 \leq L}
    \psi_{n, m + \ell_1} \overline{\psi_{n, m + \ell_2}}.
\]
Separating the diagonal contribution $\ell_1 = \ell_2$ and interchanging the summation order, which can be done because $\psi_{n, m}$ keeps track of the summation domain,
\[
  L^2 S^2 \ll
  R^\epsilon LU^2M^6 
  + R^\epsilon U M^3 \Re
    \sum_{1 \leq \ell_2 < \ell_1 \leq L}
    \sum_n
    \sum_m
      \psi_{n, m + \ell_1} \overline{\psi_{n, m + \ell_2}}.
\]
To obtain the desired inequality perform the change of variables $m \mapsto m - \ell_2$ and group the terms corresponding to each value of $\ell = \ell_1 - \ell_2$.
\end{proof}

\section{The function $h$}\label{s:h}

In this section we prove the estimates we need about the function $h$. Note that the convexity of $-f$ implies that $-f': [0, r_{\infty}) \rightarrow \mathbb{R}^+$ is one-to-one, and therefore its inverse function $\phi$ is well-defined. By Lemma~4.1 of \cite{chamizo} we know that
\begin{equation}\label{F}
  \frac{\partial}{\partial m} h(n, m) = F\big(n/m^2\big) \quad \text{where} \quad F(u) = f\big(\phi(\sqrt{u})\big).
\end{equation}

The estimates for $h$ near the ``bad'' points $u_{j}$ will depend on the order of vanishing of $f'''(r)$. By definition, each $u_j$ is the preimage by the function $\phi(\sqrt{u})$ of a zero $r_j$ of $f'''(r)$, except the last one which is added for convenience. If $r_{j} \neq 0$ we define $d_{j}$ as the unique nonnegative integer satisfying $f'''(r) \asymp (r-r_{j})^{d_{j}}$ as $r \rightarrow r_{j}$. For $r_{0} = 0$ we define $d_{0}$ as the unique nonnegative integer satisfying $f'''(r) \asymp r^{2d_{0} + 1}$ as $r \rightarrow 0^+$. We also set $d_{\infty} = -5/2$. 

\begin{lemma}\label{lem:F2}
We have $F'(u) \asymp (1+u)^{-3/2}$ for $0 \leq u < \infty$. We also have $F''(u) \neq 0$ for $u\ne u_j$, $0\le j \leq j_0$, and 
\[
F''(u) \asymp (u-u_j)^{d_j} \quad \text{as} \quad u \rightarrow u_j
\qquad\text{and}\qquad
F''(u) \asymp u^{d_\infty} \quad \text{as} \quad u \rightarrow \infty.
\]
\end{lemma}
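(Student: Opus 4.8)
The plan is to first differentiate the relation $F(u)=f(\phi(\sqrt u))$ to obtain closed formulas for $F'$ and $F''$, and then read off the required sizes by examining $f$ near three kinds of points: the fold $r=r_\infty$, the bad points $r_j$ with $r_j\neq0$, and the pole $r=0$. Writing $r=\phi(\sqrt u)$, so that $F(u)=f(r)$ and $-f'(r)=\sqrt u$, and using that $-f$ is convex with $f''$ nowhere vanishing (which the positivity of the Gaussian curvature forces) and that $-f'$ is a bijection from $[0,r_\infty)$ onto $[0,\infty)$ (as recalled above the lemma), the map $u\mapsto r$ is a smooth increasing bijection onto $[0,r_\infty)$. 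Differentiating $-f'(\phi(\sqrt u))=\sqrt u$ gives $\frac{dr}{du}=-1/(2\sqrt u\,f''(r))$, and the chain rule then yields
\[
  F'(u)=\frac{1}{2f''(r)},\qquad
  F''(u)=\frac{f'''(r)}{4\sqrt u\,f''(r)^3},\qquad r=\phi(\sqrt u).
\]
Since $f''$ never vanishes, for $u>0$ we have $F''(u)=0$ exactly when $f'''(\phi(\sqrt u))=0$, that is (by the definition of the $r_j$) exactly when $u\in\{u_1,\dots,u_{j_0-1}\}$; together with the one possible exception at $u_0=0$ this gives the claimed non-vanishing of $F''$ away from the $u_j$. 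It remains to determine the orders of $f''(r)$ and $f'''(r)$ in the three regimes $u\to\infty$, $u\to u_j$ (with $u_j\neq0$) and $u\to0$.

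For $u\to\infty$ (equivalently $r\to r_\infty$) I would exploit that the Gaussian curvature of the surface, namely $K(r)=\frac{f'(r)f''(r)}{r(1+f'(r)^2)^2}$, lies between two positive constants. Solving for $f''$ and using $|f'|\to\infty$ gives $|f''(r)|\asymp|f'(r)|^3\asymp u^{3/2}$, hence $F'(u)\asymp u^{-3/2}$. Differentiating the identity $f'f''=Kr(1+f'^2)^2$ and reusing the curvature formula to simplify leads to
\[
  f'f'''=K'\,r\,(1+f'^2)^2+\frac{f'f''}{r}+(f'')^2\,\frac{3f'^2-1}{1+f'^2}.
\]
Dividing by $(f'')^2\asymp|f'|^6$ and letting $r\to r_\infty$, the last term tends to $3$ and the other two to $0$; for the one involving $K'$ one uses that $K$ is a smooth function of the arclength $s$ of the generatrix, so $|K'(r)|=|dK/ds|\sqrt{1+f'^2}\ll|f'|$. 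Thus $f'f'''\sim3(f'')^2$, so $|f'''(r)|\asymp|f'(r)|^5\asymp u^{5/2}$, and substituting into the formula for $F''$ (together with $\sqrt u\,f'(r)=-u$) gives $F''(u)\asymp u^{-5/2}=u^{d_\infty}$.

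For $u\to u_j$ with $u_j\neq0$ the variable $r$ stays in a compact subinterval of $(0,r_\infty)$, so $f''(r)\asymp1$ and $\sqrt u\asymp1$; since $f''(r_j)\neq0$, a Taylor expansion of $-f'(r)=\sqrt u$ about $r_j$ gives $r-r_j\asymp\sqrt u-\sqrt{u_j}\asymp u-u_j$, whence $F''(u)\asymp f'''(r)\asymp(r-r_j)^{d_j}\asymp(u-u_j)^{d_j}$ by the definition of $d_j$. For $u\to0$ I would use that $\mathbb{E}$, being a smooth body of revolution, forces $f(r)=g(r^2)$ with $g$ smooth and $g'(0)\neq0$; then $-f'(r)=-2rg'(r^2)$ gives $r\asymp\sqrt u$, $f''(r)=2g'(r^2)+4r^2g''(r^2)\asymp1$, and $f'''(r)=r(12g''(r^2)+8r^2g'''(r^2))\asymp r^{2d_0+1}\asymp u^{d_0+1/2}$ by the definition of $d_0$, so $F''(u)\asymp u^{d_0}=(u-u_0)^{d_0}$ (in particular $F''$ extends continuously to $0$) and $F'(u)\asymp1$. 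On every compact subset of $(0,\infty)$ avoiding the $u_j$ one trivially has $f''(r),f'''(r),\sqrt u\asymp1$, hence $F'(u)\asymp1$ and $F''(u)\asymp1$. Patching these finitely many regions, and observing that $|f''(\phi(\sqrt u))|\asymp(1+u)^{3/2}$ throughout so that $F'(u)=1/(2f''(r))\asymp(1+u)^{-3/2}$ globally, completes the proof.

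The step I expect to cause the most trouble is the regime $u\to\infty$: there $f$ and its first three derivatives all blow up, and pinning down the exact exponents — above all $|f'''|\asymp|f'|^5$, which rests on the non-cancellation $f'f'''\sim3(f'')^2$ — requires using simultaneously that the Gaussian curvature stays between two positive constants and that its derivative along the generatrix is under control near the fold at $r=r_\infty$.
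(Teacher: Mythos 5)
Your proof is correct, and for the first three of the four regimes it reaches the same formulas by the same route as the paper (the paper writes $F'(u)=\frac{1}{2c(u)(1+u)^{3/2}}$ and $F''(u)=\frac{f'''(\phi(\sqrt u))}{4c(u)^3(1+u)^{9/2}u^{1/2}}$ with $c(u)=k(\phi(\sqrt u))$, $k$ the planar curvature of the generatrix; after substituting $f''(r)=k(r)(1+f'^2)^{3/2}$ these are exactly your expressions). The genuine divergence is in the regime $u\to\infty$. The paper writes the planar curvature as $k(r_\infty)=\lim_{r\to r_\infty^-} f''(r)/(f'(r))^3$ and then simply applies L'H\^{o}pital's rule to this quotient, getting in one line that $f'''(\phi(\sqrt u))\asymp u^{5/2}$ from the fact that $k(r_\infty)$ is a finite nonzero number. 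You instead invoke the Gaussian curvature $K(r)=\frac{f'f''}{r(1+f'^2)^2}$ of the surface, solve for $f''$, differentiate the identity, and control the term with $K'$ by an arclength argument. Both work; the paper's L'H\^{o}pital trick is shorter and avoids having to bound $K'(r)$ as $r\to r_\infty$, which in your version requires the observation that $K$ is smooth in arclength so that $|K'(r)|\ll\sqrt{1+f'^2}$. The cost of the paper's route is that it needs one to note separately that $c(u)$ is bounded between positive constants; the cost of yours is the extra differentiation of the curvature identity. What your approach buys is that it makes explicit the role of the bounded Gaussian curvature hypothesis on the three-dimensional surface, while the paper instead works entirely with the two-dimensional curvature of the meridian (whose boundedness and non-vanishing it asserts rather than derives from the surface curvature).
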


\begin{proof}
Let $k(r)$ denote the curvature of $r \mapsto \big(r, f(r)\big)$, which admits the explicit formula (see p. 11 of \cite{spivak})
\begin{equation}\label{eq:k}
  f''(r) = 
  k(r)\big(1+|f'(r)|^2\big)^{3/2},
\end{equation}
and set $c(u) = k\big(\phi(\sqrt{u})\big)$. Differentiating $F$ and recalling that $\phi$ is the inverse function of $-f'$ we obtain (\emph{cf.} the proof of Lemma~4.2 of \cite{chamizo})
\begin{align*}
  F'(u) &= 
    \frac{1}{2c(u)(1+u)^{3/2}}, \\
  F''(u) &= 
    \frac{
      f'''\big(\phi(\sqrt{u})\big)
    }{
      4 \big(c(u)\big)^3(1+u)^{9/2}u^{1/2}
    }.  
\end{align*}

Now all but the last claim of the lemma is clear as $c_1<c(u)<c_2$ for some constants $c_1,c_2>0$ and $\phi(\sqrt{u})$ is a regular function for $u > 0$, and behaves like $C\sqrt{u}$ for some $C \neq 0$ as $u \rightarrow 0^+$. To establish the last claim, we note that by \eqref{eq:k} and L'Hôpital's rule,
\[
  k(r_{\infty}) 
  = \lim_{r \rightarrow r_{\infty}^-} \frac{f''(r)}{\big(f'(r)\big)^3}
  = \lim_{r \rightarrow r_{\infty}^-} \frac{f'''(r)}{3\big(f'(r)\big)^2f''(r)}
  = \lim_{u \rightarrow \infty} \frac{f'''\big(\phi(\sqrt{u})\big)}{3c(u)(1+u)^{3/2}u}.
\]
Therefore $f'''\big(\phi(\sqrt{u})\big) \asymp u^{5/2}$ when $u \rightarrow \infty$, and $F''(u) \asymp u^{-5/2}$.
\end{proof}

With Lemma~\ref{lem:F2} we can estimate higher derivatives of $h$.

\begin{proposition}\label{prop:h1}
Let $(n, m) \in \big(\mathbb{R}^+\big)^2$ with $m \asymp M$. If $n/m^2 < u_{j_0}$ let $U$ be distance of $n/m^2$ to the closest $u_i$, say $u_j$. If $n/m^2 \geq u_{j_0}$ take $U = n/m^2$ and $j = \infty$.Then
\begin{equation*}
  \frac{\partial^3 h}{\partial n^2 \partial m}(n, m) \asymp \frac{U^{d_{j}}}{M^4}. 
\end{equation*}
\end{proposition}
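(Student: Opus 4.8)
The strategy is to compute $\frac{\partial^3 h}{\partial n^2 \partial m}$ by differentiating the identity \eqref{F} twice with respect to $n$, reducing everything to derivatives of the single-variable function $F$. Since $\frac{\partial h}{\partial m}(n,m) = F(n/m^2)$, writing $u = n/m^2$ we immediately get $\frac{\partial}{\partial n}\frac{\partial h}{\partial m} = m^{-2} F'(u)$ and then $\frac{\partial^2}{\partial n^2}\frac{\partial h}{\partial m} = m^{-4} F''(u)$. Assuming equality of mixed partials (which is legitimate since $h$ is smooth in the relevant region, $f$ being real analytic and $\phi$ regular for $u>0$), this gives $\frac{\partial^3 h}{\partial n^2 \partial m}(n,m) = m^{-4} F''(n/m^2)$. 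With $m \asymp M$ the factor $m^{-4} \asymp M^{-4}$ is settled, so the whole proposition comes down to showing $F''(n/m^2) \asymp U^{d_j}$ under the stated conventions for $U$ and $j$.

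The estimate for $F''$ is essentially a repackaging of Lemma~\ref{lem:F2}. In the case $n/m^2 \geq u_{j_0}$ we have $U = n/m^2 \geq u_{j_0} \gg 1$, so $(1+U) \asymp U$, and Lemma~\ref{lem:F2} gives $F''(U) \asymp U^{d_\infty}$; since $j = \infty$ and $d_\infty = -5/2$ this is exactly $U^{d_j}$, including the borderline behavior near $u_{j_0}$ where the local estimate at $u_{j_0}$ (namely $F'' \asymp (u - u_{j_0})^{d_{j_0}}$) must be reconciled with the large-$u$ estimate on the overlap $u \asymp u_{j_0}$ — there both give $F'' \asymp 1$ since $u_{j_0}$ is by construction not a zero of $f'''$, i.e.\ $d_{j_0}$ need not be used here because $u_{j_0}$ is merely an auxiliary point. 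In the case $n/m^2 < u_{j_0}$, let $u_j$ be the nearest bad point and $U = |n/m^2 - u_j|$. Since the $u_i$ are finitely many fixed points and $n/m^2$ lies in a bounded region, $(1 + n/m^2) \asymp 1$, so the only non-trivial factor in $F''$ is $f'''(\phi(\sqrt{n/m^2}))$; as $n/m^2 \to u_j$ we have $\phi(\sqrt{n/m^2}) \to r_j$, and Lemma~\ref{lem:F2} already records $F''(u) \asymp (u-u_j)^{d_j}$ as $u \to u_j$, which reads $F''(n/m^2) \asymp U^{d_j}$. Away from all the $u_i$, $F''$ is bounded above and below by positive constants, and $U$ is likewise $\asymp 1$, so the claimed relation holds there as well with trivial exponent bookkeeping.

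The only genuinely delicate point is the \emph{uniformity} of the implied constants: the relation $F''(u) \asymp (u - u_j)^{d_j}$ in Lemma~\ref{lem:F2} is stated as an asymptotic as $u \to u_j$, so one must argue that the comparison holds uniformly on a full fixed neighborhood of each $u_j$ (say, on $[u_j - (u_{j+1}-u_j)/2,\ u_j + (u_{j+1}-u_j)/2]$), and then patch these finitely many neighborhoods together with the compact region where $F''$ is bounded away from zero. This is where the hypothesis that each zero of $f'''$ has finite order is used: it guarantees that the factorization $f'''(r) = (r - r_j)^{d_j} g_j(r)$ with $g_j$ continuous and non-vanishing at $r_j$ is valid in a whole neighborhood, not just infinitesimally, which in turn — after composing with the regular change of variable $u \mapsto \phi(\sqrt{u})$ and using $c_1 < c(u) < c_2$ — yields the two-sided bound with uniform constants. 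I expect this patching argument to be the main (though still routine) obstacle; once it is in place, the proposition follows by combining $\frac{\partial^3 h}{\partial n^2 \partial m} = m^{-4} F''(n/m^2)$ with $m \asymp M$.
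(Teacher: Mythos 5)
Your proof is correct and follows essentially the same route as the paper: the paper's proof is the one-liner ``By \eqref{F} the partial derivative is $m^{-4}F''(n/m^2)$ and the result follows from Lemma~\ref{lem:F2}.'' Your more detailed discussion of the uniformity of the implied constants and the patching of local asymptotics into two-sided bounds on full neighborhoods is a reasonable elaboration of what Lemma~\ref{lem:F2} is implicitly taken to provide, not a different argument.
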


\begin{proof}
By \eqref{F} the partial derivative is $m^{-4}F''(n/m^2)$ and the result follows from Lemma~\ref{lem:F2}.
\end{proof}

\begin{proposition}\label{prop:h2}
Let $(n, m) \in \big(\mathbb{R}^+\big)^2$ with $m \asymp M$ and fix $j$ with $d_j > 0$. If $U = |n/m^2 - u_{j}|$ is small enough, then
\begin{equation*}
 \frac{\partial^3 h}{\partial n \partial m^2}(n, m) \asymp \frac{1}{M^3}. \label{h3}
\end{equation*}
\end{proposition}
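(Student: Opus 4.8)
The plan is to compute $\partial^3 h/(\partial n \partial m^2)$ in terms of the function $F$ introduced in \eqref{F}, just as was done for $\partial^3 h/(\partial n^2 \partial m)$ in Proposition~\ref{prop:h1}, and then to read off the asymptotics from Lemma~\ref{lem:F2}. Starting from $\partial h/\partial m = F(n/m^2)$, one differentiation in $n$ gives $\partial^2 h/(\partial n \partial m) = m^{-2} F'(n/m^2)$, and a further differentiation in $m$ yields
\[
  \frac{\partial^3 h}{\partial n \partial m^2}(n, m)
  = -2 m^{-3} F'(n/m^2) - 2 n m^{-5} F''(n/m^2)
  = -2 m^{-3}\Big( F'(u) + u F''(u) \Big),
\]
where $u = n/m^2$. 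So the whole question reduces to showing that $F'(u) + u F''(u) \asymp 1$ in a neighbourhood of $u = u_j$.

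Now Lemma~\ref{lem:F2} tells us that $F'(u) \asymp (1+u)^{-3/2} \asymp 1$ uniformly for $u$ near the fixed point $u_j$ (note $u_j$ is finite and $j$ is fixed), while $F''(u) \asymp (u - u_j)^{d_j} = U^{d_j}$ as $u \to u_j$. Since we have assumed $d_j > 0$, the term $u F''(u)$ is bounded in absolute value by $O(U^{d_j}) = O(U)$, which tends to $0$ as $U \to 0$. Hence for $U$ small enough, $|u F''(u)|$ is smaller than, say, half the lower bound for $F'(u)$, and the triangle inequality gives $F'(u) + u F''(u) \asymp F'(u) \asymp 1$. Multiplying back the factor $m^{-3} \asymp M^{-3}$ (using $m \asymp M$) yields $\partial^3 h/(\partial n \partial m^2) \asymp M^{-3}$, as claimed.

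There is no real obstacle here: the content is entirely bookkeeping once one has the formula $\partial h/\partial m = F(n/m^2)$ from \eqref{F} and the derivative estimates of Lemma~\ref{lem:F2}. The one point that deserves a word of care is the phrase ``$U$ small enough'': the threshold on $U$ below which the estimate holds may depend on $j$, but since there are only finitely many $u_j$ with $0 \le j \le j_0$ this causes no trouble, and the hypothesis $d_j > 0$ is exactly what is needed to make the ``error'' term $u F''(u)$ genuinely smaller than the ``main'' term $F'(u)$ near $u_j$ (when $d_j = 0$ the two would be of the same size and the conclusion could fail).
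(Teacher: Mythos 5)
Your proof is correct and follows essentially the same route as the paper: express the derivative as $-2m^{-3}\big(F'(n/m^2)+ (n/m^2)F''(n/m^2)\big)$, note from Lemma~\ref{lem:F2} that $F'$ is $\asymp 1$ near the finite point $u_j$ while $(n/m^2)F''(n/m^2)\ll U^{d_j}\to 0$ since $d_j>0$, and conclude. The only difference is that you spell out the bookkeeping a bit more explicitly than the paper does.
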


\begin{proof}
The partial derivative here is $-2m^{-3}\big(F''(n/m^2)n/m^2+F'(n/m^2)\big)$. By Lemma~\ref{lem:F2} the function $F'$ remains positive and bounded in bounded subintervals of $\mathbb{R}^+$, while $F''(n/m^2)n/m^2\to 0$ when $U\to 0$.
\end{proof}

\begin{proposition}\label{prop:h3}
Let $(n, m) \in \big(\mathbb{R}^+\big)^2$ with $m \asymp M$ and fix $j$ with $d_j > 0$. If $U = |n/m^2 - u_{j}|$ is small enough and $1 \leq \ell \leq UM$,
\begin{equation*}
 \frac{\partial h}{\partial n}(n, m+\ell) - \frac{\partial h}{\partial n}(n, m) = C_{j}\frac{\ell}{m(m+\ell)} + O\left(\frac{\ell U^{d_{j}+1}}{M^2}\right)
\end{equation*}
for some constant $C_{j} \neq 0$.
\end{proposition}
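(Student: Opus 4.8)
The plan is to route everything through the first mixed partial of $h$, which \eqref{F} makes explicit. Differentiating \eqref{F} with respect to $n$, and using that $h$ is smooth for $n,m>0$ so that mixed partials commute, one gets $\partial^2 h/\partial n\,\partial m\,(n,m)=m^{-2}F'(n/m^2)$. Holding $n$ fixed and integrating in the second variable then gives
\[
  \frac{\partial h}{\partial n}(n,m+\ell)-\frac{\partial h}{\partial n}(n,m)
  =\int_m^{m+\ell}\frac{F'(n/s^2)}{s^2}\,ds,
\]
and the substitution $u=n/s^2$ turns the right-hand side into $\tfrac{1}{2\sqrt n}\int_{v_\ell}^{v}u^{-1/2}F'(u)\,du$, where $v=n/m^2$ and $v_\ell=n/(m+\ell)^2$.

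The point of this last form is that $\int_{v_\ell}^{v}u^{-1/2}\,du=2(\sqrt v-\sqrt{v_\ell})=2\sqrt n\,\ell/(m(m+\ell))$. So I would split $F'(u)=F'(u_j)+\bigl(F'(u)-F'(u_j)\bigr)$: the constant piece contributes exactly $F'(u_j)\,\ell/(m(m+\ell))$, which is the announced main term with $C_j=F'(u_j)$, and this constant is nonzero --- indeed $C_j\asymp(1+u_j)^{-3/2}$ --- by Lemma~\ref{lem:F2}. For the other piece I would first note that the interval of integration stays within $O(U)$ of $u_j$: since $m\asymp M$ and $1\le\ell\le UM$ one has $v-v_\ell=v\,\ell(2m+\ell)/(m+\ell)^2\ll Uv$, while $|v-u_j|=U$, so for $U$ small enough every $u\in[v_\ell,v]$ satisfies $|u-u_j|\ll U$ (the case $u_j=0$, in which $u^{-1/2}$ is unbounded but still integrable down to $v_\ell>0$, needs no separate treatment). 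Then, by Lemma~\ref{lem:F2}, $|F'(u)-F'(u_j)|=\bigl|\int_{u_j}^{u}F''(t)\,dt\bigr|\ll|u-u_j|^{d_j+1}\ll U^{d_j+1}$, and pulling this uniform bound out of the integral controls the remaining piece by $\ll U^{d_j+1}\cdot\tfrac{1}{\sqrt n}\int_{v_\ell}^{v}u^{-1/2}\,du=U^{d_j+1}\,\ell/(m(m+\ell))\asymp\ell U^{d_j+1}/M^2$, which is the claimed error.

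The argument is short once the substitution $u=n/s^2$ is in place, so there is no serious difficulty; the one step requiring a little care --- and the one that uses all three hypotheses $m\asymp M$, $\ell\le UM$ and ``$U$ small enough'' --- is the uniform estimate $|u-u_j|\ll U$ on $[v_\ell,v]$, since this is what lets us apply the asymptotics of $F''$ from Lemma~\ref{lem:F2} throughout the range of integration rather than only at the point $u_j$.
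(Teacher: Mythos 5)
Your proof is correct and is essentially the same as the paper's: both start from $\partial h/\partial n(n,m+\ell)-\partial h/\partial n(n,m)=\int_0^\ell F'(n/(m+t)^2)\,(m+t)^{-2}\,dt$, decompose $F'$ as $F'(u_j)+\int_{u_j}^{\,\cdot}F''$, recognize the main term $F'(u_j)\,\ell/(m(m+\ell))$, and bound the remainder via $F''(t)\asymp(t-u_j)^{d_j}$ from Lemma~\ref{lem:F2} together with the observation that the argument of $F'$ stays within $O(U)$ of $u_j$ for $0\le t\le\ell\le UM$. The substitution $u=n/s^2$ you perform is a cosmetic variant of the paper's direct integration in $t$ and does not alter the argument.
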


\begin{proof}
We express the left hand side as
\begin{align*}
  \int_0^\ell \frac{\partial^2 h}{\partial n \partial m}(n, m + t) \ dt
  &= \int_0^\ell 
    F'\left(\frac{n}{(m+t)^2}\right) \frac{dt}{(m+t)^2} \\
  &= \int_0^\ell 
    \left[ 
      F'(u_{j}) 
      + \int_{u_{j}}^{n/(m+t)^2} 
      F''(v) \ dv 
    \right] \frac{dt}{(m+t)^2}  \\
  &= F'(u_{j}) \frac{\ell}{m(m+\ell)} + O\left(\frac{\ell U^{d_j+1}}{M^2}\right).
\end{align*}
To bound the error term we have applied Lemma~\ref{lem:F2} noting that $n/(m+t)^2-u_j=O(U)$ for $0\le t\le UM$.
\end{proof}

\section{The van der Corput estimate}\label{s:Ubig}

In this section we generalize the argument in \cite{chamizo} to prove Theorem~\ref{thm:s1} in some ranges and Theorem~\ref{thm:s2} using the simplest van der Corput bound for $T$. The first named author wants to take the opportunity to point out that the case $L=M$ was neglected there (although it does not affect the result). To simplify the proofs, we will assume from now on that $UM \geq R^{3/8}$, as otherwise the trivial estimate $S \ll R^{\epsilon}UM^3$ suffices to prove the desired inequalities. We will also refer to the arguments of $S$ in the statements of these theorems as $U_1$ and $U_2$ for the sake of convenience. 

\begin{proposition}\label{prop:vdC}
Let $R$, $M$, $U$, $U_1$ and $U_2$ be as in the hypothesis of either Theorem~\ref{thm:s1} or \ref{thm:s2}, setting $j = \infty$ in the second case. Then
\begin{multline*}
  \sum_{n}
    e\big(R(h(n, m + \ell) - h(n, m))\big)
  \\
  \ll R^{1/2}\ell^{1/2}U^{(d_j+2)/2} + R^{-1/2}\ell^{-1/2}U^{-d_j/2}M^2,
\end{multline*}
where the range of the summation is $U_1(m+\ell)^2 \leq n < U_2m^2$.
\end{proposition}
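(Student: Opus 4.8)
The plan is to estimate the inner sum over $n$ by a single application of van der Corput's exponent pair $(1/2, 1/2)$ (equivalently, the $B$-process plus the trivial bound, or van der Corput's second-derivative test). Write $\Phi(n) = R\big(h(n, m+\ell) - h(n, m)\big)$ for the phase. The key input is the size of $\Phi''(n) = \partial^2_n\big(R(h(n,m+\ell) - h(n,m))\big)$, which we obtain by writing it as an integral in the $m$-variable of the third derivative controlled by Proposition~\ref{prop:h1}:
\[
  \Phi''(n) = R\int_0^\ell \frac{\partial^3 h}{\partial n^2 \partial m}(n, m+t)\, dt \asymp R\,\ell\,\frac{U^{d_j}}{M^4},
\]
using that for $n$ in the stated range and $t \in [0, \ell]$ the quantity $|n/(m+t)^2 - u_j|$ stays $\asymp U$ (this needs the dyadic setup: $n/m^2$ and $n/(m+\ell)^2$ both lie in $[U_1, U_2)$, an interval of length comparable to its distance $U$ to $u_j$, and $\ell \leq UM$ keeps us from drifting out of that regime; for $j = \infty$, $U = n/m^2 \asymp U_1 \asymp U_2$ directly). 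Note that Proposition~\ref{prop:h1} also guarantees $\Phi''$ does not change sign, so van der Corput applies on the whole interval without splitting.

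With $\lambda_2 := R\ell U^{d_j} M^{-4}$ a uniform lower and upper bound for $|\Phi''|$ on the summation interval (of length $\ll UM^2$), van der Corput's second-derivative test gives
\[
  \sum_n e\big(\Phi(n)\big) \ll UM^2 \cdot \lambda_2^{1/2} + \lambda_2^{-1/2}.
\]
Substituting $\lambda_2 \asymp R\ell U^{d_j} M^{-4}$,
\[
  UM^2 \cdot \big(R\ell U^{d_j}M^{-4}\big)^{1/2} = R^{1/2}\ell^{1/2} U^{(d_j+2)/2},
  \qquad
  \big(R\ell U^{d_j}M^{-4}\big)^{-1/2} = R^{-1/2}\ell^{-1/2}U^{-d_j/2}M^2,
\]
which are exactly the two terms in the claimed bound. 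So the proof is essentially: (i) express $\Phi''$ as an $m$-integral of $\partial^3_{nn m}h$; (ii) invoke Proposition~\ref{prop:h1} together with the dyadic constraints and $\ell \leq UM$ to pin down $|\Phi''| \asymp R\ell U^{d_j}M^{-4}$ uniformly, with constant sign; (iii) apply the second-derivative test and simplify.

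The main obstacle — really the only point requiring care — is step (ii): verifying that the argument $n/(m+t)^2$ stays within the dyadic block (so that Proposition~\ref{prop:h1} yields a two-sided bound with the \emph{same} exponent $d_j$ throughout, rather than transitioning across $u_j$ or into a neighbouring block), and checking the endpoint bookkeeping of the summation range $U_1(m+\ell)^2 \le n < U_2 m^2$ against the running constraint $M \le m, m+t < 2M$. One should also confirm that the hypothesis $UM \ge R^{3/8}$ assumed at the start of the section, and $2M \le R^{5/8}$, are not needed here — the proposition as stated is uniform — so that it can be fed into both the van der Corput treatment of this section and the Kuzmin–Landau treatment later. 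Everything else is the textbook second-derivative estimate.
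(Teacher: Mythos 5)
Your proposal is correct and follows essentially the same route as the paper: bound $\partial^2_n\big(h(n,m+\ell)-h(n,m)\big)$ via Proposition~\ref{prop:h1} (the paper uses the mean value theorem where you use the equivalent integral representation), then apply the standard second-derivative test. The only cosmetic remark is that the constraint $\ell \leq UM$ you invoke in step~(ii) is not actually needed: the summation range $U_1(m+\ell)^2 \leq n < U_2 m^2$ already forces $n/(m+t)^2 \in [U_1,U_2)$ for every $t\in[0,\ell]$, so the argument of $F''$ never leaves the dyadic block.
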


\begin{proof}
By the mean value theorem and Proposition~\ref{prop:h1} we have
\[
  \frac{\partial^2}{\partial n^2} \big(h(n, m + \ell) - h(n, m)\big)
  = \ell \frac{\partial^3 h}{\partial n^2 \partial m}(n, \tilde{m})
  \asymp \ell \frac{U^{d_j}}{M^4}.
\]
Applying now the simplest van der Corput well-known estimate (see, for instance, Theorem~2.2 of \cite{grahamkolesnik}),
\[
  \sum_{n}
    e\big(R(h(n, m + \ell) - h(n, m))\big)
  \ll
  UM^2\big(R \ell U^{d_j} M^{-4} \big)^{1/2} + \big(R \ell U^{d_j} M^{-4} \big)^{-1/2}.
\]
This concludes the proof.
\end{proof}

\begin{proposition}\label{2p1}
Theorem~\ref{thm:s1} holds when $d_j = 0$, $1$, or when $d_j \geq 2$ and $U \gg R^{-5/(8d_j-8)}$.
\end{proposition}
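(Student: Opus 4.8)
The plan is to feed the bound of Proposition~\ref{prop:vdC} into the Weyl-step inequality of Proposition~\ref{prop:Weyl}, optimize over the length parameter $L$, and check that in the stated ranges the outcome is $\ll M^2 R^{3/8+\epsilon}$. Concretely: since we have already assumed $UM \geq R^{3/8}$, the sum $S$ we must bound is one of the two pieces $S(u_{j+1}-2U, u_{j+1}-U, M)$ or $S(u_j+U, u_j+2U, M)$, so that $n/m^2$ stays at distance $\asymp U$ from the closest $u_i$ and the quantity ``$U$'' appearing in Proposition~\ref{prop:h1} is genuinely comparable to the parameter $U$ in the statement. Summing the bound of Proposition~\ref{prop:vdC} over $1 \leq \ell \leq L$ gives $T \ll R^{1/2}L^{1/2}U^{(d_j+2)/2} + R^{-1/2}L^{-1/2}U^{-d_j/2}M^2$ (the $\ell$-average costs nothing here since the summand is increasing in $\ell$, up to the $1/L$ in the definition of $T$ which only helps). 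Plugging into Proposition~\ref{prop:Weyl},
\[
  |S|^2 \ll R^{\epsilon}\Big( U^2 M^6 L^{-1} + UM^3\big(R^{1/2}L^{1/2}U^{(d_j+2)/2} + R^{-1/2}L^{-1/2}U^{-d_j/2}M^2\big)\Big).
\]

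Next I would balance the first term $U^2M^6L^{-1}$ against the main contribution of the $T$-term. The dominant piece of $T$ is typically $R^{1/2}L^{1/2}U^{(d_j+2)/2}$, and equating $U^2M^6L^{-1}$ with $UM^3 R^{1/2}L^{1/2}U^{(d_j+2)/2}$ yields the choice $L \asymp \big(U^{(2-d_j)/2}M^3 R^{-1/2}\big)^{2/3} = U^{(2-d_j)/3}M^2R^{-1/3}$, and one must verify this respects the constraints $1 \leq L \leq M$ and $U_2 L + 1 \ll UM$ required by Proposition~\ref{prop:Weyl}; this is exactly where the hypothesis $U \gg R^{-5/(8d_j-8)}$ for $d_j \geq 2$ enters, while for $d_j = 0,1$ the admissible range of $U$ is all of $(0, R^{5/4}M^{-2}]$ (roughly) and no lower bound on $U$ is needed. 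With this $L$, the balanced value is $U^2M^6L^{-1} \asymp U^{(4+d_j)/3}M^4R^{1/3}$, so $|S|^2 \ll R^{\epsilon}\big(U^{(4+d_j)/3}M^4R^{1/3} + \text{(second $T$-term)}\big)$, and one checks $|S| \ll M^2R^{3/8+\epsilon}$ amounts to $U^{(4+d_j)/3}R^{1/3} \ll R^{3/4}$, i.e. $U^{4+d_j} \ll R^{5/4}$, which holds since $U \leq R^{5/4}M^{-2}$ and $M \geq 1$ — indeed $U \lesssim R^{5/4}$ already gives far more than enough room for $d_j \geq 0$ after also using $UM \geq R^{3/8}$. (The arithmetic will need care: the correct bookkeeping uses both $U \leq R^{5/4}M^{-2}$ \emph{and} $2M \leq R^{5/8}$, and one must also dispatch the lower-order term $R^{-1/2}L^{-1/2}U^{-d_j/2}M^2$ and the edge cases where $L$ hits $1$ or $M$, reverting to the plain van der Corput estimate for $S$ itself or to the trivial bound.)

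The main obstacle I anticipate is not any single estimate but the constraint management: one must simultaneously honour $1 \leq L \leq M$, the technical condition $U_2 L + 1 \ll UM$ from the Weyl step, and the requirement $2M \leq R^{5/8}$, and it is precisely the interplay of these that forces the threshold $U \gg R^{-5/(8d_j-8)}$ when $d_j \geq 2$ (below it, the optimal $L$ would exceed $M$, the Weyl step is too lossy, and one genuinely needs the finer Diophantine argument deferred to the later sections). A secondary subtlety is that for $d_j = 0$ the factor $U^{d_j}$ disappears and Proposition~\ref{prop:h1} degenerates to the classical situation of \cite{chamizo}, so this case should be quoted essentially verbatim from there, with the only genuinely new input being the careful treatment of the $L = M$ endpoint that the excerpt explicitly flags as having been overlooked.
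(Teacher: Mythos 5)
Your overall strategy coincides with the paper's: bound $T$ via Proposition~\ref{prop:vdC}, feed it into Proposition~\ref{prop:Weyl}, then choose $L$. But the proposal never actually closes, and the crux of the proposition — \emph{why} the threshold $U \gg R^{-5/(8d_j-8)}$ appears — is misattributed. You balance $U^2M^6L^{-1}$ against the \emph{second-derivative} piece of $T$, getting $L \asymp U^{(2-d_j)/3}M^2R^{-1/3}$, and claim the threshold comes from the constraint $U_2L + 1 \ll UM$. Check this: since $U_2 \ll 1$ here (we are in Theorem~\ref{thm:s1}, so $U_2 \le u_{j+1}$), that constraint is $L \ll UM$, i.e. $M \ll U^{(1+d_j)/3}R^{1/3}$. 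Under your claimed threshold $U \gg R^{-5/(8d_j-8)}$ the right side is $\ll R^{(3d_j-13)/(24(d_j-1))}$, which for $2 \le d_j \le 4$ has a nonpositive exponent — so your $L$ violates the Weyl constraint precisely in the ranges you must handle, and you are forced to cap $L$ at $UM$. At that point your balancing rationale evaporates and you are back to the paper's choice $L = \min\{R^{1/2}U^{-d_j}, UM\}$, whose analysis you have not carried out.

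The gap is that you dismiss the third term of the $T$-bound, $R^{-1/2}L^{-1/2}U^{(2-d_j)/2}M^2$ (after dividing through), as ``lower-order,'' whereas in the binding case $L = UM$ it becomes $R^{-1/2}U^{(1-d_j)/2}M^{3/2}$ and is exactly the term that forces $U \gg R^{-5/(8d_j-8)}$ for $d_j \ge 2$ (using $M \le R^{5/8}$). Your check ``$U^{4+d_j} \ll R^{5/4}$ holds since $U \leq R^{5/4}M^{-2}$'' also imports the hypothesis of Theorem~\ref{thm:s2}; in the present setting $U \le (u_{j+1}-u_j)/4 \ll 1$ and that bound is trivially true, so it gives no information and certainly does not produce the stated threshold. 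To repair the proof: take $L = \min\{R^{1/2}U^{-d_j}, UM\}$; in the first regime all three terms of the resulting bound are $\ll R^{3/4}$ using only $M \le R^{5/8}$ and $U \ll 1$; in the second regime use $U^{d_j+1} < R^{1/2}M^{-1}$ on the middle term and the hypothesis $U \gg R^{-5/(8d_j-8)}$ (vacuous for $d_j = 0,1$) on the last.
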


\begin{proof}
Note that since $U_2 \ll 1$ we are in position to apply Proposition~\ref{prop:Weyl} as long as we take $L \leq UM$. Using Proposition~\ref{prop:vdC} to bound $T(U_1, U_2, M, L)$ we obtain
\begin{equation}\label{eq:vdC}
  R^{-\epsilon} M^{-4}|S|^2 \ll L^{-1}U^2M^2 + R^{1/2}L^{1/2}U^{(d_j+4)/2} + R^{-1/2}L^{-1/2}U^{(2-d_j)/2}M^2.
\end{equation}
We choose $L = \min\{R^{1/2}U^{-d_j}, UM\}$. If $L = R^{1/2}U^{-d_j}$ then using $M \leq R^{5/8}$ we obtain $M^{-4}|S|^2 \ll R^{3/4+\epsilon}$, as desired. Hence assume $L = UM$ and $U^{d_j + 1} < R^{1/2}M^{-1}$. We have
\[
  R^{-\epsilon} M^{-4}|S|^2 \ll UM + R^{1/2}U^{(d_j+5)/2}M^{1/2} + R^{-1/2}U^{(1-d_j)/2}M^{3/4}.
\]
Using the inequality $U^{d_j + 1} < R^{1/2}M^{-1}$ on the second summand and the hypothesis of this proposition we conclude again $M^{-4}|S|^2 \ll R^{3/4+\epsilon}$.
\end{proof}

\begin{proof}[Proof of Theorem~\ref{thm:s2}]
We proceed similarly as in the previous proof. Note that now $U_2 \asymp U$ and we may take $1 \leq L \leq M$ in Proposition~\ref{prop:Weyl}. Using Proposition~\ref{prop:vdC} to bound $T(U_1, U_2, M, L)$ we obtain exactly the same bound \eqref{eq:vdC} with $d_\infty = -5/2$:
\[
  R^{-\epsilon} M^{-4}|S|^2 \ll L^{-1}U^2M^2 + R^{1/2}L^{1/2}U^{3/4} + R^{-1/2}L^{-1/2}U^{9/4}M^2.
\]
The choice $L = \min\{R^{1/2}, M\}$ also works in exactly the same way, using $U \leq R^{5/4}M^{-2}$ and $M \leq R^{5/8}$, to show $M^{-4}|S|^2 \ll U^{2}R^{3/4+\epsilon}$.
\end{proof}

\section{Diophantine approximation of the phase}

As $U$ gets smaller than $R^{-5/(8d_j-8)}$ the van der Corput estimate is not good enough to prove Theorem~\ref{thm:s1} anymore. The reason is that the phase of the exponential sum in \eqref{eq:T} is almost linear in $n$, as Proposition~\ref{prop:h3} shows, and the oscillation is not captured by a second derivative test.

Throughout this section we will assume that $R$, $M$, $U$, $U_1$, $U_2$ and $j$ are as in the statement of Theorem~\ref{thm:s1}, $UM \geq R^{5/8}$ (see comments in \S\ref{s:Ubig}) and $M \leq m < 2M$. Let $I_{m, \ell} = [U_1(m + \ell)^2, U_2m^2]$, which we may assume non-empty by restricting the possible values of $m$, and define the quantities
\begin{align*}
  \phi_\ell(n, m) &= R\left(\frac{\partial h}{\partial n}(n, m + \ell) - \frac{\partial h}{\partial n}(n, m)\right), \\
  \Phi_\ell(m) &= \min_{x \in I_{m, \ell}} \dist\big(\phi_\ell(x, m), \mathbb{Z}\big).
\end{align*}
The function $\phi_\ell$ is the derivative of the phase of the exponential sum in $n$ appearing in \eqref{eq:T}. Since by Proposition~\ref{prop:h1} it is monotone in $n$, we can apply Kuzmin-Landau's lemma (Theorem~2.1 of \cite{grahamkolesnik}) to obtain the bound
\begin{equation}
  \Bigg| \sum_{n \in I_{m, \ell}}
    e\big(R(h(n, m + \ell) - h(n, m))\big)
  \Bigg|
  \ll \big(\Phi_\ell(m)\big)^{-1}.
\end{equation}
Suppose we can find another bound $H_\ell$ for the same exponential sum, this time uniform in $m$, to apply in those cases when $\Phi_\ell \approx 0$. Then knowing very little about the distribution of the values $\Phi_\ell(m)$ we can find a good bound for $T$. The underlying idea here is to gain from some control of the spacing. In \cite{bombieriiwaniec} and \cite{huxley} this is accomplished via large sieve inequalities, while we introduce the spacing through the following simple result:

\begin{lemma}\label{lem:magic}
Assume we have a finite sequence of points $a_m \in [0, \frac 12]$ satisfying the following condition:
\[
  \# \{ m \, : \, a_m \leq x \}
  \leq A + Bx 
  \qquad \text{for every}\quad 0 \leq x \leq 1/2.
\]
Then for any $H > 0$ we have
\[
  \sum_{m} \min\{H, a_m^{-1}\}
  \leq AH + B(1 + |\log H|).
\]
\end{lemma}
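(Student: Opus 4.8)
The plan is to sort the points and estimate the sum by partitioning the range of the $a_m$ dyadically. First I would observe that if the counting function satisfies $\#\{m : a_m \le x\} \le A + Bx$, then the number of indices with $a_m$ in a dyadic band $(x/2, x]$ is at most $A + Bx$ as well (crudely bounding the count on $(x/2,x]$ by the count on $[0,x]$). On such a band we have $\min\{H, a_m^{-1}\} \le \min\{H, 2/x\}$, so the contribution of the band is at most $(A + Bx)\min\{H, 2/x\}$.

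Next I would split the indices into two groups according to whether $a_m \le 1/H$ or $a_m > 1/H$ (handling separately the possible index with $a_m = 0$, for which we simply use the bound $H$; there are at most $A$ such indices by taking $x \to 0^+$, contributing $\le AH$). For the indices with $a_m \le 1/H$ we use $\min\{H, a_m^{-1}\} = H$, and there are at most $A + B/H$ of them, giving a contribution $\le AH + B$. For the indices with $a_m > 1/H$ we use $\min\{H,a_m^{-1}\} = a_m^{-1}$ and sum over the dyadic bands $(2^{-k-1}, 2^{-k}] \cap (1/H, 1/2]$; on the band with right endpoint $x = 2^{-k}$ the contribution is $\le (A + Bx)\cdot(2/x) = 2A/x + 2B$. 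Summing $2A/x$ over the bands with $x \ge 1/H$ gives a geometric series dominated by its largest term $2A \cdot 2^{\lceil \log_2 H\rceil} \ll AH$, and summing the constant $2B$ over the roughly $\log_2 H$ bands gives $\ll B|\log H|$. Collecting these pieces yields $\sum_m \min\{H, a_m^{-1}\} \ll AH + B(1 + |\log H|)$; tracking the constants carefully (replacing crude dyadic band counts by telescoping the exact counting bound) gives the clean constant-free inequality as stated.

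I do not expect any serious obstacle here; the result is elementary. The only mild care needed is in the bookkeeping: making sure the dyadic geometric sums are bounded by their extreme terms with the right constants, and dealing cleanly with the boundary cases $a_m = 0$ and $a_m$ near $1/H$ so that the final constants come out to exactly $1$ rather than an unspecified $O(1)$. A slicker alternative that avoids dyadic blocks altogether is to write $\min\{H, a_m^{-1}\} = \int_0^\infty \mathbf{1}[t < \min\{H, a_m^{-1}\}]\, dt = \int_0^H \mathbf{1}[a_m < 1/t]\, dt$, swap the sum and integral to get $\int_0^H \#\{m : a_m < 1/t\}\, dt \le \int_0^H (A + B/t)\, dt$, and then split the $t$-integral at $t = 1$: the contribution of $t \le 1$ is $\le A + B/t$ integrated, which needs the trivial bound $\#\{m : a_m < 1/t\} \le \#\{m\} $ only if $B/t$ is not integrable near $0$ — so instead one integrates from a small cutoff. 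Cleanest is: $\int_0^H \#\{m : a_m < 1/t\}\,dt$; for $t \le B/A$ use the constant bound... The dyadic argument above is the safest route and I would present that.
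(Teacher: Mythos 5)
Your dyadic argument does establish $\sum_m \min\{H, a_m^{-1}\} \ll AH + B(1 + |\log H|)$, and since the lemma only ever enters the paper through the $\ll$-bound \eqref{eq:Tbound}, that is all that is genuinely needed; but note that, as you yourself acknowledge, the version with an implicit constant is strictly weaker than the lemma as stated, and the dyadic blocks really do lose a factor (for instance the geometric series majorized by its top term already contributes about $4AH$, not $AH$). The paper's proof takes a different route and gets the clean constants at once: after sorting $0 \le a_1 \le \cdots \le a_N$, the hypothesis evaluated at $x = a_m$ gives the pointwise lower bound $a_m \ge (m-A)/B$, so for any non-increasing $f$ (extended to $(-\infty,0]$ by the constant $f(0)$) one has $\sum_m f(a_m) \le \sum_m f\big((m-A)/B\big) \le B\int_{-A/B}^{1/2} f = Af(0) + B\int_0^{1/2} f$, and with $f(x) = \min\{H, x^{-1}\}$ the last integral is at most $1 + |\log H|$. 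Your abandoned ``slicker alternative'' is essentially this same sum-to-integral comparison written in layer-cake form, and it does close cleanly: the place where you got stuck is fixed by cutting the $t$-integral at $t = 2$ (the value at which $1/t$ exits the range $[0,\tfrac12]$ of the $a_m$), not at $t = 1$ or $t = B/A$. For $t < 2$ bound $\#\{m : a_m < 1/t\}$ by the total count $\le A + B/2$, and for $t \ge 2$ by $A + B/t$; then $\int_0^2 (A + B/2)\,dt + \int_2^H (A + B/t)\,dt = AH + B\big(1 + \log(H/2)\big) \le AH + B(1 + |\log H|)$ for $H > 2$, and the case $H \le 2$ is immediate. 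So your instinct that the integral representation is the cleanest route was correct; the dyadic version, taken alone, does not quite deliver the constants in the statement.
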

Assuming, in our setting, that $A_\ell$, $B_\ell$ and $H_\ell$ are functions such that $\ell A_\ell$, $\ell B_\ell$ and $\ell H_\ell$ are non-decreasing in $\ell$, and $H_\ell$ is bounded above and below by powers of $R$, it is immediate that for any fixed $\epsilon > 0$,
\begin{equation}\label{eq:Tbound}
  T(U_1, U_2, M, L) \ll R^\epsilon \big(A_L H_L + B_L\big).
\end{equation}

\begin{proof}[Proof of Lemma~\ref{lem:magic}]
Let us say that the finite sequence is $0 \leq a_1 \leq a_2 \leq \cdots \leq a_N \leq 1/2$. Note that, by hypothesis, $m \leq A + Ba_m$. Let $f: [0, \frac 12] \rightarrow \mathbb{R}$ be a non-increasing function and extend it to the negative real numbers as the constant function $f(0)$. Then
\[
  \sum_{m} f(a_m) 
  \leq \sum_{m} f\left( \frac{m-A}{B} \right) 
  \leq B\int_{-A/B}^{1/2}f(x) \, dx 
  = Af(0) + B\int_0^{1/2}f(x) \, dx.
\]
The result follows applying this inequality with $f(x) = \min\{H, x^{-1}\}$.
\end{proof}

The upper bound $H_\ell$ will be either the trivial estimate $UM^2$, or the second term in the van der Corput estimate given by Proposition~\ref{prop:vdC} (the first one may be neglected in the range $U \ll R^{-5/(8d_j-8)}$, $UM \geq R^{3/8}$). The pair $(A_\ell, B_\ell)$ will be given by one of the following two propositions.

\begin{proposition}\label{prop:AB1}
Assume $U^{d_j+1}M$ is small enough and $1 \leq \ell \leq UM$. Then
\[
  \#\{ m \, : \, \Phi_\ell(m) \leq x\}
  \ll 1 + \frac{R \ell}{M^2} + M\left(1 + \frac{M^2}{R\ell} \right)x 
  \qquad \big(0 \leq x \leq 1/2\big).
\]
\end{proposition}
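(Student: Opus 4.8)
The plan is to estimate the counting function $\#\{m : \Phi_\ell(m) \leq x\}$ by exploiting the explicit asymptotic for $\phi_\ell$ furnished by Proposition~\ref{prop:h3}. First I would substitute that expansion into the definition of $\phi_\ell(n,m)$: multiplying through by $R$, we get $\phi_\ell(n,m) = C_j R\,\ell/(m(m+\ell)) + O(R\ell U^{d_j+1} M^{-2})$, where crucially the main term $C_j R\,\ell/(m(m+\ell))$ does \emph{not} depend on $n$, and the error term is $O(x_0)$ with $x_0 = R\ell U^{d_j+1}M^{-2}$, which we are told is small since $U^{d_j+1}M$ is small enough (and $\ell \leq UM$ gives $R\ell U^{d_j+1}M^{-2} \leq RU^{d_j+2}M^{-1}$, small in the relevant range). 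Consequently $\Phi_\ell(m) = \min_{x\in I_{m,\ell}}\dist(\phi_\ell(x,m),\mathbb{Z})$ is, up to an additive error $O(x_0)$, equal to $\dist\big(C_j R\,\ell/(m(m+\ell)), \mathbb{Z}\big)$, because the $n$-dependence only enters through the error term. So the event $\Phi_\ell(m)\leq x$ forces $\dist\big(g(m),\mathbb{Z}\big) \leq x + O(x_0)$ where $g(m) = C_j R\,\ell/(m(m+\ell))$.

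Next I would count the integers $m \in [M, 2M)$ for which $g(m)$ lies within $x+O(x_0)$ of an integer. The function $g$ is monotone in $m$ on $[M,2M)$ and its total variation is $g(M) - g(2M) \asymp R\ell/M^2$; moreover its derivative satisfies $|g'(m)| \asymp R\ell/M^3$. A standard divided-differences / monotonicity argument then shows that the number of $m$ with $\dist(g(m),\mathbb{Z}) \leq y$ is $\ll (1 + \text{total variation of } g)\cdot(1 + M|g'|^{-1}\text{-type spacing})$; more precisely, partitioning $[M,2M)$ into $O(1 + R\ell/M^2)$ subintervals on each of which $g$ traverses an interval of length $\leq 1$, and on each such subinterval the preimage of an interval of length $y$ near an integer has length $\ll y/|g'| + 1 \ll yM^3/(R\ell) + 1$, hence contains $\ll yM^3/(R\ell) + 1$ integers — but we must be careful that the ``$+1$'' per subinterval is already accounted for. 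Collecting, $\#\{m : \dist(g(m),\mathbb{Z})\leq y\} \ll 1 + R\ell/M^2 + (1 + R\ell/M^2)\cdot yM^3/(R\ell) \ll 1 + R\ell/M^2 + yM^3/(R\ell) + yM$. Taking $y = x + O(x_0)$ and absorbing the $x_0$ contribution into the $R\ell/M^2$ and additive constant terms (using that $x_0 M^3/(R\ell) = U^{d_j+1}M$ and $x_0 M = R\ell U^{d_j+1}M^{-1}$ are small), one arrives at the claimed bound $\#\{m : \Phi_\ell(m)\leq x\} \ll 1 + R\ell/M^2 + M(1 + M^2/(R\ell))x$.

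I expect the main technical obstacle to be the careful bookkeeping in the counting step: one must verify that the ``$+1$ per subinterval'' terms, summed over the $O(1 + R\ell/M^2)$ subintervals, indeed only contribute $O(1 + R\ell/M^2)$ and do not interact badly with the $x$-dependent terms, and one must handle the two regimes $R\ell/M^2 \gg 1$ and $R\ell/M^2 \ll 1$ uniformly — in the latter regime $g$ is essentially monotone across the whole range with small total variation, so only one ``near-integer value'' can be hit and the count is governed purely by the derivative lower bound. A secondary subtlety is making precise the statement that $\Phi_\ell(m)$ depends on $n$ only through the error term: this requires that the interval $I_{m,\ell}$ is nonempty (guaranteed by the running assumptions) and that $n/(m+t)^2 - u_j = O(U)$ uniformly for $n \in I_{m,\ell}$, which is exactly the hypothesis used in Proposition~\ref{prop:h3}. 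Once these points are settled the estimate follows by combining the monotonicity of $g$ with its derivative bounds $|g'| \asymp R\ell/M^3$, and the proof concludes.
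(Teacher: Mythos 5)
Your proof is correct, but it follows a genuinely different route from the paper's. The paper fixes, for each $m$, a point $x_m\in I_{m,\ell}$ attaining $\Phi_\ell(m)$ and applies a two-variable mean value argument together with Propositions~\ref{prop:h1} and~\ref{prop:h2} to show the consecutive differences $\phi_\ell(x_{m+1},m+1)-\phi_\ell(x_m,m)$ are $\asymp R\ell/M^3$ with constant sign (the $n$-dependent contribution is dominated because $U^{d_j+1}M$ is small); it then runs the spacing count directly on the sequence $\phi_\ell(x_m,m)$. You instead invoke Proposition~\ref{prop:h3} --- which the paper reserves for Proposition~\ref{prop:AB2} --- to write $\phi_\ell(n,m)=g(m)+O(x_0)$ with $g(m)=C_jR\ell/(m(m+\ell))$ independent of $n$ and $x_0=R\ell U^{d_j+1}M^{-2}$, deduce that $\Phi_\ell(m)\le x$ forces $\dist(g(m),\mathbb{Z})\le x+O(x_0)$, and count using the monotonicity and derivative size of $g$. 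Both arguments produce the same key constants ($|g'|\asymp R\ell/M^3$, total variation $\asymp R\ell/M^2$), and your absorption step is correct ($x_0M^3/(R\ell)=U^{d_j+1}M\ll 1$ into the additive constant, $x_0M=(R\ell/M^2)\,U^{d_j+1}M\ll R\ell/M^2$ into the variation term). Your approach is attractive because it uses a single tool for both Propositions~\ref{prop:AB1} and~\ref{prop:AB2} and dispatches the $n$-dependence at the outset; the paper's is slightly leaner in that it needs only the order of magnitude of the mixed third derivative, not the explicit main term, and so avoids carrying $x_0$ through the count. One small imprecision: you state that $x_0M$ ``is small,'' but in general it need not be --- the correct statement, which you do use implicitly, is that $x_0M$ is dominated by $R\ell/M^2$.
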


\begin{proof}
Choose for each pair $(m, \ell)$ a point $x_m \in I_{m, \ell}$ (depending implicitly in $\ell$) satisfying
\[
  \Phi_\ell(m) = \dist\big(\phi_\ell(x_m, m), \mathbb{Z}\big).
\]
By the mean value theorem, $\phi_\ell(x_{m+1}, m+1) - \phi_\ell(x_m, m)$ equals
\[ 
  R \ell \frac{ \partial^3 h }{\partial n \partial m^2}(x_1, y_1) 
  + R\ell(x_{m+1} - x_m) \frac{ \partial^3 h }{\partial n^2 \partial m}(x_2, y_2),
\]
for some points $(x_1, y_1)$, $(x_2, y_2)$ lying in the rectangle
\[
  [U_1(m + \ell)^2, U_2(m+1)^2] \times [m, m + \ell + 1].
\]
The function $x/y^2$ over this rectangle satisfies 
\[
  U_1(1-{4}M^{-1}) \leq x/y^2 \leq U_2(1+4M^{-1}),
\]
and since $UM \geq R^{3/8}$ we have $|u_j - x_i/y_i^2| \asymp U$ for $i= 1, 2$. Using the estimates given by Propositions~\ref{prop:h1} and \ref{prop:h2},
\begin{equation}\label{eq:phi}
  \phi_\ell(x_{m+1}, m+1) - \phi_\ell(x_m, m) 
  \asymp \frac{R \ell}{M^3} 
  + O \left( 
    R\ell \cdot UM^2 \cdot \frac{U^{d_j}}{M^4}
  \right)
  \asymp \frac{R \ell}{M^3},
\end{equation}
the sign of the left hand side being always the same.

Since $M \leq m < 2M$, we deduce from \eqref{eq:phi} that the number of integers $k$ satisfying $|\phi_\ell(x_m, m) - k| \leq 1/2$ for some $m$ is at most a constant times $1 + R\ell M^{-2}$. On the other hand we deduce again from \eqref{eq:phi} that for each of those $k$ and any $x\ge 0$
\[
  \# \{ m \, : \,
    |\phi_\ell(x_m, m) - k| \leq x
  \}
  \ll 1 + R^{-1} \ell^{-1} M^3 x.
\]
Therefore,
\[
  \#\{ m : \Phi_\ell(m) \leq x \} 
  \ll \left(1+\frac{R\ell}{M^2}\right) \left(1 + \frac{M^3}{R\ell}x\right)
\]
for every $0\le x\le 1/2$.
\end{proof}

\begin{proposition}\label{prop:AB2}
Fix $\epsilon > 0$. For any $0 < U \leq 1$ we have
\[
  \#\{ m \, : \, \Phi_\ell(m) \leq x\}
  \ll R^\epsilon \big(
    1 + R\ell U^{d_j + 1} + M^2 x
  \big)
  \qquad \big(0 \leq x \leq 1/2\big).
\]
\end{proposition}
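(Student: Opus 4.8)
The plan is to mimic the proof of Proposition~\ref{prop:AB1}, but replacing the use of Proposition~\ref{prop:h2} (which required $U^{d_j+1}M$ small) by the cruder, always-valid description of $\phi_\ell$ coming from Proposition~\ref{prop:h3}. Recall from Proposition~\ref{prop:h3} that for $1\le\ell\le UM$ we have, uniformly in $n\in I_{m,\ell}$,
\[
  \frac{1}{R}\phi_\ell(n,m)
  = \frac{\partial h}{\partial n}(n,m+\ell)-\frac{\partial h}{\partial n}(n,m)
  = C_j\,\frac{\ell}{m(m+\ell)} + O\!\left(\frac{\ell U^{d_j+1}}{M^2}\right),
\]
with $C_j\neq 0$. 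So along the points $x_m\in I_{m,\ell}$ realizing $\Phi_\ell(m)=\dist(\phi_\ell(x_m,m),\mathbb Z)$, the value $\phi_\ell(x_m,m)$ is, up to an error of size $O(R\ell U^{d_j+1}M^{-2})$, the single-variable function $R\,C_j\,\ell/(m(m+\ell))$. As in Proposition~\ref{prop:AB1}, I would first estimate how many integers lie within distance $1/2$ of some $\phi_\ell(x_m,m)$: since $m(m+\ell)$ ranges over an interval of length $\asymp M\cdot(M+\ell)\asymp M^2$ (using $\ell\le UM\le M$), the main term $R\,C_j\,\ell/(m(m+\ell))$ sweeps through an interval whose length is $\asymp R\ell M^{-2}\cdot M^{-2}\cdot M^2 = R\ell/M^{2}\ll R\ell U^{d_j+1}$ only when... — wait, more carefully: the derivative of $t\mapsto R C_j \ell/t$ at $t\asymp M^2$ is $\asymp R\ell/M^4$, so over a $t$-interval of length $M^2$ the main term varies by $\asymp R\ell/M^2$. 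Adding the error $O(R\ell U^{d_j+1})$, the total variation of $\phi_\ell(x_m,m)$ as $m$ ranges over $[M,2M)$ is $\ll R\ell M^{-2}+R\ell U^{d_j+1}\ll R\ell U^{d_j+1}$ once we note $M^{-2}\le U^{2}\le U^{d_j+1}$ fails in general — so I should simply keep both terms and absorb $R\ell M^{-2}$ into the bound, OR observe that in the regime of interest ($U\le R^{-5/(8d_j-8)}$, which is where this proposition is used) one has $R\ell M^{-2}\ll R\ell U^{d_j+1}$ precisely when $M^{-2}\ll U^{d_j+1}$; rather than rely on that, I would state the count as $\ll 1+R\ell U^{d_j+1}+R\ell M^{-2}$ and then note $R\ell M^{-2}\le R\ell U^{d_j+1}$ is \emph{not} automatic, so instead I keep $1+R\ell/M^2$ and bound $R\ell/M^2\le R\ell$... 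Actually the cleanest route: the number of integers within $1/2$ of the set $\{\phi_\ell(x_m,m)\}$ is $\ll 1 + (\text{total variation}) \ll 1 + R\ell M^{-2} + R\ell U^{d_j+1}$, and since $\ell\le UM\le M$ gives $R\ell M^{-2}\le R\ell M^{-1}\cdot M^{-1}\le RU\cdot M^{-1}$, which combined with $UM\ge R^{3/8}$... I will simply present it with the harmless extra term and absorb constants, since $R\ell M^{-2}\ll R\ell U^{d_j+1}$ does hold in the only range where Proposition~\ref{prop:AB2} is invoked; in the final writeup I would either add this as a running hypothesis or swallow $R\ell M^{-2}$ into $R^\epsilon$ via $\ell\le M$, $M\le R^{5/8}$.

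Next, for each integer $k$ within $1/2$ of the relevant range, I would count $\#\{m : |\phi_\ell(x_m,m)-k|\le x\}$. Here is the one genuinely new ingredient compared to Proposition~\ref{prop:AB1}: because we no longer have the clean $\asymp R\ell/M^3$ gap between consecutive values $\phi_\ell(x_m,m)$ (the error term $O(R\ell U^{d_j+1}M^{-2})$ might now dominate the main gap $\asymp R\ell/M^3$), consecutive values need not be separated, and several $m$'s can share the same $k$. But the map $m\mapsto R\,C_j\,\ell/(m(m+\ell))$ is still monotone with derivative $\asymp R\ell/M^4$ in $m$, so the \emph{main term} takes each integer value for $\ll 1+M^4/(R\ell)$ consecutive $m$; within such a block the spread of $\phi_\ell(x_m,m)$ is controlled and the sublevel set $\{|\phi_\ell(x_m,m)-k|\le x\}$ has size $\ll 1 + M^4/(R\ell)\cdot x + (\text{number of }m\text{ hit by the error})$. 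Summing over the $\ll 1+R\ell U^{d_j+1}+R\ell M^{-2}$ values of $k$ and multiplying, one gets
\[
  \#\{m:\Phi_\ell(m)\le x\}
  \ll \big(1+R\ell U^{d_j+1}\big)\left(1+\frac{M^4}{R\ell}\,x\right)
  + (\text{error contribution}),
\]
and expanding, the product yields $1 + R\ell U^{d_j+1} + M^2 x + U^{d_j+1}M^4 x$; since $U\le 1$ and (in the range of use) $U^{d_j+1}M^2\ll 1$, the last term is $\ll M^2 x$, giving the claimed $\ll R^\epsilon(1+R\ell U^{d_j+1}+M^2 x)$. The factor $R^\epsilon$ is there to absorb divisor-type or log losses and the clumping caused by the error term overlaps; I would be generous with it rather than optimize.

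The step I expect to be the main obstacle is controlling the \textbf{clumping} in the second count: unlike in Proposition~\ref{prop:AB1}, the values $\phi_\ell(x_m,m)$ are not provably spaced, so I cannot directly conclude that only $O(1+M^3 x/(R\ell))$ of them lie near a given integer. I need to argue that the error term $O(R\ell U^{d_j+1}M^{-2})$ in Proposition~\ref{prop:h3}, while possibly larger than the main gap, is still small enough — relative to the \emph{total} length $\asymp 1$ of the sweep and the available $M$ values of $m$ — that the worst-case count near any integer is $\ll R^\epsilon(1+M^2 x)/(1+R\ell U^{d_j+1})$ on average over $k$, which is exactly what is needed after multiplying by the number of relevant $k$. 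Concretely I would bound $\sum_k \#\{m: |\phi_\ell(x_m,m)-k|\le x\}$ directly as $\#\{m: \text{at all}\} \cdot (\text{max multiplicity})$ is too lossy, and instead split $[M,2M)$ into $\asymp 1+R\ell U^{d_j+1}$ sub-blocks on each of which the \emph{main term} moves by $\le 1$; on each such block the number of $m$ with $\Phi_\ell(m)\le x$ is $\ll 1 + M^2 x$ by the monotonicity of the main term plus the smallness of the error (here $U^{d_j+1}M^2$ small is used), and summing over blocks closes the estimate. Verifying that "$U^{d_j+1}M^2$ small" — equivalently the hypothesis under which Propositions~\ref{prop:h2},~\ref{prop:h3} were stated, together with $U\le 1$ — is exactly the leverage that tames the error term is the crux of the argument.
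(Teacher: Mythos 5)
Your proposal misses the key idea of the paper's proof, which is arithmetic rather than analytic. The paper does \emph{not} try to salvage the spacing argument of Proposition~\ref{prop:AB1}. Instead it observes that if the leading term $C_j R\ell/(m(m+\ell))$ from Proposition~\ref{prop:h3} is within $x$ of an integer $k$, then $|C_jR\ell - km(m+\ell)| \le 2M^2x$, so $m$ divides some integer in an interval of length $\ll M^2x$ around $C_jR\ell$. There are $\ll 1 + M^2x$ such integers, each with $\ll R^\epsilon$ divisors, giving $\#\{m : \dist(C_jR\ell/(m(m+\ell)),\mathbb{Z}) \le x\} \ll R^\epsilon(1 + M^2 x)$. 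The error term in Proposition~\ref{prop:h3} is then absorbed simply by replacing $x$ with $x + O(R\ell U^{d_j+1}M^{-2})$, which contributes exactly the $R\ell U^{d_j+1}$ summand. This is where the $R^\epsilon$ comes from: it is a divisor-function loss, not a ``log or clumping'' loss.

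Your spacing-based approach cannot produce the stated bound. You correctly identify that the consecutive gaps $\asymp R\ell/M^3$ can be dwarfed by the error $O(R\ell U^{d_j+1}M^{-2})$, so the values $\phi_\ell(x_m,m)$ need not be separated; but you never resolve the clumping and, as you acknowledge, your sketch only closes under the extra assumption ``$U^{d_j+1}M^2$ small'', which is \emph{not} a hypothesis of Proposition~\ref{prop:AB2} (the whole point of this proposition is to cover ranges where Proposition~\ref{prop:AB1} and the smallness conditions of Propositions~\ref{prop:h2}--\ref{prop:h3} do not give what is needed). Moreover, the natural outcome of a spacing count here is a bound of the shape $(1+R\ell/M^2)(1+M^3x/(R\ell))$, whose $R\ell/M^2$ term is not dominated by $R\ell U^{d_j+1}$ in general; you flag this yourself but never dispose of it. The divisor trick sidesteps all of this: the denominator $m(m+\ell)$ is an integer, and that single fact (absent from a purely analytic treatment) is what makes the count sharp.
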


\begin{proof}
Let $C_j$ the constant involved in Proposition~\ref{prop:h3}, and assume that we have
\[
  \dist\left(
    C_j \frac{R\ell}{m (m+\ell)}
    , \mathbb{Z}
    \right) \leq x
    \qquad \text{for some}\quad x\ge 0.
\]
This means that there exists an integer $k = k(m, \ell)$ satisfying
\[
  |C_j R\ell - km(m+\ell)| 
  \leq m(m+\ell)x 
  \leq 2M^2 x.
\]
In particular, $m$ must divide a certain integer $km(m+\ell)$ lying in the interval centered at $C_jR\ell$ of half-length $2M^2 x$. Since there are at most $1 + 4M^2x$ of these integers, and each has at most $O(R^\epsilon)$ divisors, we conclude
\[
  \#\big\{m : \dist\big(C_jR\ell/(m(m+\ell)), \mathbb{Z}\big) \leq x\big\} 
  \ll R^\epsilon \big(1+M^2x\big).
\]
Replacing $x$ by $x + O\big(R\ell U^{d_j + 1}M^{-2}\big)$ the result follows from Proposition~\ref{prop:h3}.
\end{proof}

The following two propositions, together with Proposition~\ref{2p1} in \S\ref{s:Ubig}, complete the proof of Theorem~\ref{thm:s1}, and hence also the proof of Theorem~\ref{thethm}.

\begin{proposition}\label{2p2}
If $U \ll R^{-5/(8d_j + 8)}$ for a sufficiently small constant then Theorem~\ref{thm:s1} holds when $d_j \leq 4$, or when $d_j \geq 5$ and $U \gg R^{-5/(4d_j - 16)}$.
\end{proposition}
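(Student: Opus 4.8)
The plan is to combine the two upper bounds $H_\ell$ available — the trivial bound $UM^2$ and the second term of the van der Corput estimate, namely $R^{-1/2}\ell^{-1/2}U^{-d_j/2}M^2$ — with the spacing estimate of Proposition~\ref{prop:AB2}, feeding everything through Lemma~\ref{lem:magic} in the guise of \eqref{eq:Tbound}. In the range $U\ll R^{-5/(8d_j+8)}$ we are past the point where the second derivative test alone suffices, so $\phi_\ell$ is nearly linear in $n$ and Kuzmin--Landau applies; the point of Proposition~\ref{prop:AB2} is precisely that the arithmetic of the leading Taylor coefficient $C_j$ forces $\Phi_\ell(m)$ to be bounded away from $0$ for most $m$, because $\dist(C_jR\ell/(m(m+\ell)),\mathbb{Z})$ small makes $m$ a divisor of one of few integers near $C_jR\ell$.

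First I would set $A_\ell = R^\epsilon(1+R\ell U^{d_j+1})$ and $B_\ell = R^\epsilon M^2$ from Proposition~\ref{prop:AB2}, check the monotonicity hypotheses on $\ell A_\ell,\ell B_\ell,\ell H_\ell$ (immediate, since the $\ell$-dependence is a positive power), and then write, via \eqref{eq:Tbound},
\[
  T \ll R^{\epsilon}\big(A_L H_L + B_L\big)
    \ll R^{\epsilon}\big((1 + R L U^{d_j+1})H_L + M^2\big).
\]
Next I would substitute this into the Weyl inequality (Proposition~\ref{prop:Weyl}), which here reads $R^{-\epsilon}M^{-4}|S|^2 \ll L^{-1}U^2M^2 + U M^{-1}T$, to obtain
\[
  R^{-\epsilon}M^{-4}|S|^2 \ll L^{-1}U^2M^2 + UM(1 + RLU^{d_j+1})H_L/M^2 \cdot M + U M.
\]
I would then insert the two choices of $H_L$ in turn. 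With $H_L = UM^2$ the middle term becomes $U^2M^2 + RLU^{d_j+3}M^2$, so one wants $L$ small; with $H_L = R^{-1/2}L^{-1/2}U^{-d_j/2}M^2$ it becomes $R^{-1/2}L^{-1/2}U^{1-d_j/2}M^2 + R^{1/2}L^{1/2}U^{d_j/2+2}M^2$, which is a balance in $L$. The goal in each regime is to pick $L$ (subject to $1\le L\le UM$ and $U_2L+1\ll UM$, which is why the constant in $U\ll R^{-5/(8d_j+8)}$ is taken small) so that $M^{-4}|S|^2 \ll R^{3/4+\epsilon}$, i.e.\ $|S|\ll M^2 R^{3/8+\epsilon}$. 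Optimising the second form of $H_L$ suggests $L \asymp R^{-1}U^{-d_j-2}$, or else $L$ at its endpoint $UM$; tracking which endpoint is active and comparing with $L\le UM$ is what produces the dichotomy ``$d_j\le 4$, or $d_j\ge 5$ with $U\gg R^{-5/(4d_j-16)}$'' in the statement — the exponent $5/(4d_j-16)$ should fall out of equating the relevant boundary cases.

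The main obstacle I expect is bookkeeping the interplay of the three constraints on $L$ (the two from Proposition~\ref{prop:Weyl} and $L\le UM$ from the applicability of Proposition~\ref{prop:AB2}) against the two candidate values of $H_L$, so that the various error terms all land at or below $R^{3/4}$ uniformly. In particular one must verify that when the optimal $L$ would exceed $UM$ one can still close the argument using $L=UM$ — this is exactly where the upper restriction $U\gg R^{-5/(4d_j-16)}$ for large $d_j$ becomes necessary, and where the first van der Corput term (which we claimed negligible for $U\ll R^{-5/(8d_j-8)}$) must be rechecked to be dominated. A secondary point is confirming that the hypothesis ``$U^{d_j+1}M$ small enough'' needed for Proposition~\ref{prop:h3} (and hence for Proposition~\ref{prop:AB2} via the error term $O(R\ell U^{d_j+1}M^{-2})$ being absorbed into $x$) is implied by $U\ll R^{-5/(8d_j+8)}$ together with $M\le R^{5/8}$; this is a short computation but is essential for the error term in Proposition~\ref{prop:h3} not to swamp the main term $C_j\ell/(m(m+\ell))$.
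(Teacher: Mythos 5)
Your overall scaffolding — Weyl step, Kuzmin--Landau via Lemma~\ref{lem:magic} and \eqref{eq:Tbound}, two competing choices of $H_L$ (trivial vs.\ second van der Corput term), and a choice of $L$ around $R^{-3/4}U^2M^2$ — matches the paper. But there is a genuine gap: you feed in the \emph{wrong spacing estimate}. You take $(A_\ell,B_\ell)$ from Proposition~\ref{prop:AB2}, i.e.\ $A_\ell\asymp 1+R\ell U^{d_j+1}$, $B_\ell\asymp M^2$, whereas the paper's proof of Proposition~\ref{2p2} uses Proposition~\ref{prop:AB1}, i.e.\ $A_\ell\asymp 1+R\ell M^{-2}$, $B_\ell\asymp M(1+M^2/(R\ell))$. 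These are not interchangeable here. With $L=R^{-3/4}U^2M^2$ and $H_L=R^{-1/8}U^{-(d_j+2)/2}M$, the $A_L H_L$ contribution to $M^{-4}|S|^2$ becomes $R^{1/8}U^{(4-d_j)/2}$ using \ref{prop:AB1}, but $R^{1/8}U^{(d_j+6)/2}M^2$ using \ref{prop:AB2}. The latter is exactly what appears in the proof of Proposition~\ref{2p3}, and it is $\ll R^{3/4}$ only for $U\ll R^{-5/(4d_j+24)}$ — the range of \ref{2p3}, not of \ref{2p2}. You can check the same thing in your own ``optimize $L$'' variant: balancing the first and third terms of your display with $A_\ell=1+R\ell U^{d_j+1}$ forces $U\ll R^{-5/(4d_j+24)}$ again. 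So with \ref{prop:AB2} alone you cannot reach the intermediate range that \ref{2p2} is designed to cover (all small $U$ for $d_j=2,3,4$, and $R^{-5/(4d_j-16)}\ll U\ll R^{-5/(8d_j+8)}$ for $d_j\ge5$).

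Conceptually, the two spacing lemmas are of different natures and neither dominates the other. Proposition~\ref{prop:AB1} is purely analytic: by Propositions~\ref{prop:h1} and \ref{prop:h2} the values $\phi_\ell(x_m,m)$ are roughly equally spaced with gap $\asymp R\ell/M^3$, so they cluster near at most $1+R\ell/M^2$ integers and near each integer at most $1+M^3x/(R\ell)$ of the $m$'s fall within distance $x$. Proposition~\ref{prop:AB2} is arithmetic (the divisor argument you describe), and its $A_\ell$ carries the extra factor $U^{d_j+1}M^2$ relative to $A_\ell$ from \ref{prop:AB1}; this factor is too large in the range where \ref{2p2} applies but \ref{2p3} does not. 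On your ``secondary point'': the condition that $U^{d_j+1}M$ be small enough (needed for Propositions~\ref{prop:h3} and \ref{prop:AB1}) is indeed the reason for the clause ``for a sufficiently small constant'' in the statement; with $U\le cR^{-5/(8d_j+8)}$ and $M\le R^{5/8}$ one gets $U^{d_j+1}M\le c^{d_j+1}$. But that observation should be steering you towards \ref{prop:AB1}, not away from it.
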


\begin{proof}
We apply Proposition~\ref{prop:Weyl} to bound $S$ with $L = R^{-3/4}U^2M^2$, which always lies in the interval $[1, UM]$. Using \eqref{eq:Tbound} with $(A_L, B_L)$ given by Proposition~\ref{prop:AB1} (note the hypothesis imply $U^{d_j + 1}M$ is small enough) we obtain
\begin{equation}\label{eq:AB1}
  R^{-\epsilon} M^{-4}|S|^2 \ll
    R^{3/4}
    + \frac{U H_L}{M} \left(1 + \frac{RL}{M^2}\right)
    + U\left(1 + \frac{M^2}{RL}\right).
\end{equation}
We choose either $H_L = UM^2$ or $H_L = R^{-1/8}U^{-(d_j+2)/2}M$ (second term in Proposition~\ref{prop:vdC}) depending on whether $RL/M^2 \leq 1$ or not. In the first case, the right hand side of \eqref{eq:AB1} may be majored by $R^{3/4} + U^2M + R^{-1/4}U^{-1}$, and using $M \leq R^{5/8}$ and $U \geq R^{-1/4}$ (from $UM \geq R^{3/8}$) we conclude $M^{-4}|S|^2 \ll R^{3/4 + \epsilon}$. In the second case, the right hand side of \eqref{eq:AB1} may be majored by $R^{3/4} + R^{1/8}U^{-(d_j - 4)/2} + U$, which also leads to $M^{-4}|S|^2 \ll R^{3/4 + \epsilon}$ under the hypothesis of this proposition.
\end{proof}

\begin{proposition}\label{2p3}
Theorem~\ref{thm:s1} holds when $U \ll R^{-5/(4d_j + 24)}$.
\end{proposition}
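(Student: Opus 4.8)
The plan is to follow the template of the proof of Proposition~\ref{2p2}: a Weyl step (Proposition~\ref{prop:Weyl}) reduces $S$ to the sum $T$ of \eqref{eq:T}, which is then estimated by combining the Kuzmin--Landau bound for the inner sum over $n$ with a uniform bound $H_\ell$ by means of Lemma~\ref{lem:magic}, and one finally optimises over the Weyl shift $L$. The one essential change is that the spacing of the values $\Phi_\ell(m)$ is now supplied by Proposition~\ref{prop:AB2}, whose proof rests on the divisor bound, rather than by the geometric Proposition~\ref{prop:AB1}. We may assume $d_j \geq 1$, since otherwise Proposition~\ref{2p1} already yields the conclusion, and in that range Propositions~\ref{prop:h3} and \ref{prop:AB2} are available. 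The standing assumptions $2M \leq R^{5/8}$ and $UM \geq R^{3/8}$ together with the hypothesis confine us to $R^{-1/4} \ll U \ll R^{-5/(4d_j+24)}$; in particular $U < 1$.

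First I would apply Proposition~\ref{prop:Weyl} with $L = R^{-3/4}U^2M^2$. This is admissible: $L \geq 1$ is exactly the standing assumption $UM \geq R^{3/8}$, while $L \leq UM$ reduces to $UM \leq R^{3/4}$, which is immediate from $U < 1$ and $M \leq R^{5/8}$. Since $U^2M^6L^{-1} = R^{3/4}M^4$, Proposition~\ref{prop:Weyl} gives $R^{-\epsilon}M^{-4}|S|^2 \ll R^{3/4} + UM^{-1}T$, so it is enough to show $UM^{-1}T \ll R^{3/4+\epsilon}$. By Proposition~\ref{prop:h1} the phase derivative $\phi_\ell$ is monotone in $n$, so Kuzmin--Landau gives $|\sum_{n \in I_{m,\ell}} e(\cdots)| \ll \Phi_\ell(m)^{-1}$; using this together with a uniform bound $H_\ell$ and the spacing estimate of Proposition~\ref{prop:AB2}, relation \eqref{eq:Tbound} gives $T \ll R^\epsilon\big((1 + RLU^{d_j+1})H_L + M^2\big)$.

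The argument now splits according to the size of $RLU^{d_j+1} = R^{1/4}U^{d_j+3}M^2$. If $RLU^{d_j+1} \leq 1$ one takes the trivial bound $H_L = UM^2$, so that $T \ll R^\epsilon M^2$ and $UM^{-1}T \ll R^\epsilon UM \ll R^{3/4+\epsilon}$. If $RLU^{d_j+1} > 1$ one takes for $H_L$ the van der Corput estimate of Proposition~\ref{prop:vdC}: at $\ell = L$ the ratio of its two terms equals $RLU^{d_j+1}M^{-2} = R^{1/4}U^{d_j+3}$, which is $\leq 1$ because the hypothesis forces $U \leq R^{-1/(4d_j+12)}$, so the second term $R^{-1/8}U^{-(d_j+2)/2}M$ dominates. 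Then $(1 + RLU^{d_j+1})H_L \ll R^\epsilon R^{1/8}U^{(d_j+4)/2}M^3$, whence $UM^{-1}T \ll R^\epsilon\big(R^{1/8}U^{(d_j+6)/2}M^2 + UM\big)$. With $M \leq R^{5/8}$ the first term is $\ll R^{11/8}U^{(d_j+6)/2}$, which is $\ll R^{3/4}$ precisely when $U \ll R^{-5/(4d_j+24)}$ --- the hypothesis of the proposition --- while the second is $\ll R^{5/8}$. In either case $M^{-4}|S|^2 \ll R^{3/4+\epsilon}$, which is Theorem~\ref{thm:s1}.

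There is no genuinely new ingredient beyond Propositions~\ref{prop:Weyl}, \ref{prop:vdC} and \ref{prop:AB2} and Lemma~\ref{lem:magic}; the whole difficulty is bookkeeping, and the step I expect to demand the most care is checking that every elementary inequality used along the way --- that $L$ is an admissible shift, that $\ell A_\ell$, $\ell B_\ell$ and $\ell H_\ell$ are non-decreasing so that \eqref{eq:Tbound} may be invoked, that $U \leq R^{-1/(4d_j+12)}$, and that the closing comparison $R^{11/8}U^{(d_j+6)/2} \ll R^{3/4}$ is valid --- follows from the two standing assumptions and from $U \ll R^{-5/(4d_j+24)}$ alone. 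Indeed the exponent $5/(4d_j+24)$ is exactly the threshold that makes the contribution $R^{1/8}U^{(d_j+6)/2}M^2$ acceptable when $M$ is as large as $R^{5/8}$, which is the binding constraint.
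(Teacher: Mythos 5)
Your proof is correct and follows essentially the same route as the paper's: the same choice $L = R^{-3/4}U^2M^2$ in the Weyl step, the same spacing input from Proposition~\ref{prop:AB2} fed into \eqref{eq:Tbound}, the same dichotomy on $RLU^{d_j+1}$ with $H_L$ the trivial bound or the second van der Corput term, and the same closing inequality $R^{11/8}U^{(d_j+6)/2}\ll R^{3/4}$. The extra checks you make explicit (that $d_j\geq 1$ so Propositions~\ref{prop:h3} and \ref{prop:AB2} apply, that $1\leq L\leq UM$, and that the second van der Corput term dominates because $R^{1/4}U^{d_j+3}\leq 1$) are left implicit in the paper but are exactly the right verifications.
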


\begin{proof}
We proceed similarly as in the proof of Proposition~\ref{2p2}. We apply Proposition~\ref{prop:Weyl} to bound $S$ with $L = R^{-3/4}U^2M^2$, and use \eqref{eq:Tbound} with $(A_L, B_L)$ given by Proposition~\ref{prop:AB2} to obtain
\begin{equation}\label{eq:AB2}
  R^{-\epsilon} M^{-4}|S|^2 \ll
    R^{3/4}
    + \frac{U H_L}{M} \big(1 + RLU^{d_j + 1}\big)
    + UM.
\end{equation}
We choose either $H_L = UM^2$ or $H_L = R^{-1/8}U^{-(d_j+2)/2}M$ depending on whether $RLU^{d_j + 1} \leq 1$ or not. In the first case \eqref{eq:AB2} shows that $M^{-4}|S|^2 \ll R^{3/4 + \epsilon}$ is satisfied trivially, while in the second case the right hand side of \eqref{eq:AB2} may be majored by $R^{3/4} + R^{1/8}U^{(d_j + 6)/2}M^2 + UM$, which also leads to $M^{-4}|S|^2 \ll R^{3/4 + \epsilon}$ under the hypothesis of this proposition.
\end{proof}

\section*{Acknowledgements}

The first author is partially supported by the grant MTM2014-56350-P and the MINECO Centro de Excelencia Severo Ochoa Program SEV-2015-0554. The second author is supported by a ''la Caixa''-Severo Ochoa international PhD programme fellowship at the Instituto de Ciencias Matemáticas (CSIC-UAM-UC3M-UCM).


\begin{thebibliography}{10}

\bibitem{bombieriiwaniec}
E.~Bombieri and H.~Iwaniec.
\newblock On the order of {$\zeta({\frac{1}{2}}+it)$}.
\newblock {\em Ann. Scuola Norm. Sup. Pisa Cl. Sci. (4)}, 13(3):449--472, 1986.

\bibitem{bourgainwatt}
J.~Bourgain and N.~Watt.
\newblock Mean square of zeta function, circle problem and divisor problem
  revisited.
\newblock arXiv:1709.04340 [math.AP], 2017.

\bibitem{chamizo}
F.~Chamizo.
\newblock Lattice points in bodies of revolution.
\newblock {\em Acta Arith.}, 85(3):265--277, 1998.

\bibitem{chamizopastor}
F.~Chamizo and C.~Pastor.
\newblock Lattice points in elliptic paraboloids.
\newblock arXiv: 1611.04498 [math.NT], 2016.

\bibitem{gauss}
K.~F. Gauss.
\newblock De nexu inter multitudinem classium, in quas formae binariae secundi
  gradus distribuntur, earumque determinantem.
\newblock In {\em Werke}, volume~2, pages 269--291. Georg Olms Verlag,
  Hildesheim, New York, 1981.

\bibitem{grahamkolesnik}
S.~W. Graham and G.~Kolesnik.
\newblock {\em van der {C}orput's method of exponential sums}, volume 126 of
  {\em London Mathematical Society Lecture Note Series}.
\newblock Cambridge University Press, Cambridge, 1991.

\bibitem{guo}
J.~Guo.
\newblock On lattice points in large convex bodies.
\newblock {\em Acta Arith.}, 151(1):83--108, 2012.

\bibitem{heathbrown}
D.~R. Heath-Brown.
\newblock Lattice points in the sphere.
\newblock In {\em Number theory in progress, {V}ol. 2
  ({Z}akopane-{K}o\'scielisko, 1997)}, pages 883--892. de Gruyter, Berlin,
  1999.

\bibitem{hua}
L.-K. Hua.
\newblock The lattice-points in a circle.
\newblock {\em Quart. J. Math., Oxford Ser.}, 13:18--29, 1942.

\bibitem{huxley_book}
M.~N. Huxley.
\newblock {\em Area, lattice points, and exponential sums}, volume~13 of {\em
  London Mathematical Society Monographs. New Series}.
\newblock The Clarendon Press, Oxford University Press, New York, 1996.
\newblock Oxford Science Publications.

\bibitem{huxley}
M.~N. Huxley.
\newblock Exponential sums and lattice points. {III}.
\newblock {\em Proc. London Math. Soc. (3)}, 87(3):591--609, 2003.

\bibitem{iwanieckowalski}
H.~Iwaniec and E.~Kowalski.
\newblock {\em Analytic number theory}, volume~53 of {\em American Mathematical
  Society Colloquium Publications}.
\newblock American Mathematical Society, Providence, RI, 2004.

\bibitem{kolesnik}
G.~Kolesnik.
\newblock On the order of {$\zeta ({\frac{1}{2}}+it)$}\ and {$\Delta (R)$}.
\newblock {\em Pacific J. Math.}, 98(1):107--122, 1982.

\bibitem{spivak}
M.~Spivak.
\newblock {\em A comprehensive introduction to differential geometry. {V}ol.
  {II}}.
\newblock Published by M. Spivak, Brandeis Univ., Waltham, Mass., 1970.

\end{thebibliography}

\end{document}